
\documentclass[a4paper]{amsproc}
\usepackage{amssymb}
\usepackage{amscd} 
\usepackage[dvips]{graphicx} 


\theoremstyle{plain}

\theoremstyle{definition}

\theoremstyle{remark}
  
 \numberwithin{equation}{section}

\renewcommand{\leq}{\leqslant}
\renewcommand{\geq}{\geqslant}

\setlength{\textwidth}{28cc} \setlength{\textheight}{42cc}

\newcommand{\field}[1]{\mathbb{#1}}
\newcommand{\RR}{\field{R}}

\newcommand{\dn}{{\text{\tiny dn}}}

\newcommand{\up}{{\text{\tiny up}}}

\newcommand{\zl}{\lambda}

\newcommand{\zO}{\Omega}
\newcommand{\ftran}[1]{\mathcal{F}\left\{{#1}\right\}}

\newcommand{\iftran}[1]{\mathcal{F}^{-1}\left\{{#1}\right\}}

\theoremstyle{plain}\newtheorem{theorem}{Theorem}[section]
\theoremstyle{remark}\newtheorem{remark}[theorem]{Remark}
\theoremstyle{plain}\newtheorem{lemma}[theorem]{Lemma}
\theoremstyle{plain}
\theoremstyle{definition}
\theoremstyle{example}

\title[Emerging problems in approximation theory]{Emerging problems in approximation theory for the numerical solution of  nonlinear PDEs of integrable type}

\subjclass[2010]{41A46, 65R20, 35P25}


\author[Fermo]{\bfseries L. Fermo}

\address{
Department of Mathematics and Computer Science\\ 
University of Cagliari   \\ 
Cagliari\\
Italy}
\email{fermo@unica.it,cornelis@krein.unica.it,seatzu@unica.it}

\author[Van der Mee]{C. Van der Mee}

\author[Seatzu]{S. Seatzu}

\thanks{The research was partially supported by INDAM and by Autonomous Region of Sardinia under grant L.R.7/2007 ``Promozione della Regione Scientifica e della Innovazione Tecnologica in Sardegna''.} 

\dedicatory{Communicated by }

\begin{document}

\vspace{18mm}
\setcounter{page}{1}
\thispagestyle{empty}

\begin{abstract}
In this paper we present some open problems pertaining to the approximation theory involved in the solution of the important class of Nonlinear Partial Differential Equations (NPDEs) of integrable type. For this class of NPDEs, any Initial Value Problem (IVP) can be theoretically solved by the Inverse Scattering Transform (IST) technique whose main steps involve the solution of Volterra equations with structured kernels on unbounded domains, the solution of  Fredholm integral equations and the identification of coefficients and parameters of monomial-exponential sums. The aim of this paper is twofold: propose a method for solving the above mentioned problems under particular hypothesis and arouse interest in these problems in order to develop an effective method which works under more general assumptions. 
\end{abstract}

\maketitle

\section{Introduction}  

The class of Nonlinear Partial Differential Equations (NPDEs) of integrable type is important in mathematics as in several applicative areas of  physics, biology and engineering \cite{AC} \cite{Ablowitz2004}, \cite{AS}, \cite{FT}, \cite{NMPZ}  .
For this special class of NPDEs,  the nonlinear Schr\"odinger  (NLS) equation, which arises in modeling  electromagnetic waves in optical fibers as well as  waves on the surface of deep water, has a special role in mathematics \cite{Agrawal}, \cite{Mn74}, \cite{Shaw}, \cite{ZS}.
Firstly, we recall that the NLS equation is expressed as
\begin{equation}\label{NLS}
{\bf{i}}u_t+u_{xx}\pm 2 |u|^2u=0, \quad u=u(x,t), \quad x\in \mathbb{R}, \quad t>0 
\end{equation}
where ${\bf{i}}$ denotes the imaginary unit, the subscripts $x$ and $t$ denote the partial derivatives with respect to position $x$ and time $t$ and the sign $\pm$  depends on the symmetry properties of the model we are addressing. In particular, the plus sign appears in the focusing case and the minus sign in the defocusing case, which represent the two most important situations. 

We are interested in the initial value problem (IVP) for the NLS, that is in considering \eqref{NLS}, given the initial solution     
				$$ u_0(x)=u(0,x), \quad x \in \mathbb{R}, \quad u_0 \in L^1(\RR).$$
Following the path of the IST \cite{Ablowitz2004}, its solution can be obtained by solving, in order,  the following three problems:
\begin{itemize}
\item[(a)] determine the initial scattering data, given its initial solution;
\item[(b)] propagate the initial scattering data in time;
\item[(c)] solve two systems of integral equations whose kernels codify  the initial scattering data evolved in time.
\end{itemize}

From the numerical point of view, the problem of most interest is the first one, as effective methods to solve the two other problems have been developed recently, under the assumption that the initial scattering data are  known \cite{ArRoSe2011}.

Let us now illustrate the organization of the paper. In Section \ref{ZSsystem} we discuss the Zakharov-Shabat system, which gives a complete characterization of the scattering data associated to the NLS we want to compute, that is the transmission coefficient, the reflection coefficients (from the left and from the right), the bound states and the norming constants. Section \ref{auxiliaryfunctions} is devoted to the introduction and characterization of the auxiliary functions whose approximation is basic to evaluating all of the mentioned scattering data. In Section \ref{Marchenko} we introduce the initial Marchenko kernels, which codify the scattering data and that can be computed by solving Volterra integral equations. In Section \ref{computational} we propose a numerical method to compute the scattering data in the reflectionless case. In Section \ref{tests} we present the numerical results which confirm the effectiveness of the method in this particular case. Section \ref{conclusion} is devoted to conclusions and perspectives.

\section{Initial scattering data}\label{ZSsystem}
The characterization of the initial scattering data is based on the spectral analysis of the Zakharov-Shabat (ZS) system associated to the NLS equation, which in turn is represented by an ordinary differential equation of first order \cite{KS}, \cite{KV}. 

In fact, assuming that $u_0 \in L^1(\mathbb{R})$, it can be expressed in the following way:
\begin{equation}\label{ZS}
{\bf i}{\bf J}\frac{\partial {\bf X}}{\partial x}(\zl,x)-{\bf V}(x){\bf X}(\zl,x)=\zl {\bf X}(\zl,x), \quad x \in \RR
\end{equation}
where $\lambda \in \mathbb{C}$ is a spectral parameter, 
\begin{equation*}\label{1.2}
{\bf J}=\begin{pmatrix}1&0\\ 0&-1\end{pmatrix}, \quad {\bf V}(x)=\begin{pmatrix}0&iu_0(x)\\
\pm i \bar{u}_0(x)&0\end{pmatrix}.
\end{equation*}
Here the bar is used to denote complex conjugation.

The initial scattering data are the entries of the so-called scattering matrix and the coefficients and parameters of two spectral sums.
Denoting by
\begin{equation*}\label{S}
{\bf S}(\lambda)=\begin{pmatrix}T(\lambda)&L(\lambda)\\ R(\lambda)& T(\lambda)
\end{pmatrix},
\end{equation*}
the  scattering matrix, $T(\lambda)$ represents  the (initial) transmission coefficient, while $L(\lambda)$ and $R(\lambda)$ stand for the  initial reflection coefficients from the left and from the right, respectively.
If $T(\lambda)$ has no poles in the complex upper half plane $\mathbb{C}^+$, there are no spectral sums to identify.

Otherwise, denoting by $\lambda_1,\, \dots,\, \lambda_n$ the so-called bound states that is the finitely many poles of $T(\lambda)$ in $\mathbb{C}^+$  and by $m_1,\,\dots,\, m_n$ the corresponding multiplicities, we have to identify the coefficients $\{(\Gamma_\ell)_{js}, (\Gamma_r)_{js}\}$ as well the parameters $\{n,m_j,\lambda_j \}$ of the initial spectral sums from the left and from the right
\begin{align}
S_\ell(\alpha)=\sum_{j=1}^{ n} e^{-\lambda_j \alpha} \sum_{s=0}^{{m}_j-1}( \Gamma_\ell)_{js} \frac{\alpha^s}{s!}, \quad \alpha \geq 0 \label{S_ell}\\
S_r(\alpha)=\sum_{j=1}^n e^{\lambda_j \alpha} \sum_{s=0}^{m_j-1}(\Gamma_r)_{js} \frac{\alpha^s}{s!}, \quad \alpha \leq 0 \label{S_r}
\end{align} 
where the coefficients $(\Gamma_\ell)_{js}$ and $(\Gamma_r)_{js}$ are the so-called  norming constants from the left and from the right, respectively, and $0! = 1$.

In the IST technique, a crucial role is played by the initial Marchenko kernels from the left $\Omega_\ell(\alpha)$ and from the right $\Omega_r(\alpha)$, which are connected to the above spectral coefficients and spectral sums as follows: 
\begin{align}
\zO_\ell(\alpha)&=\rho(\alpha)+S_\ell(\alpha), \quad \textrm{for} \quad \alpha \geq 0 \label{Omega_ell} \\ \nonumber \\
\zO_r(\alpha)&=\ell(\alpha)+S_r(\alpha), \quad \textrm{for} \quad \alpha \geq 0 \label{omega_r} 
\end{align}
where 
\begin{align}\label{rho}
\rho(\alpha)= \frac{1}{2\pi}\int_{-\infty}^{+\infty} R(\lambda) e^{i \lambda \alpha} d \lambda =\iftran{R(\lambda)}
\end{align}
is the inverse Fourier transform of the reflection coefficient from the right $R(\lambda)$ 
and 
\begin{align}\label{elle}
\ell(\alpha)= \frac{1}{2\pi}\int_{-\infty}^{+\infty} L(\lambda) e^{-i \lambda \alpha} d \lambda =\frac{1}{2\pi} \ftran{L(\lambda)},
\end{align}
apart from the factor $1/2\pi$, is the Fourier transform of the reflection coefficient from  the left $L(\lambda)$.

We note that $\Omega_\ell(\alpha)$ and $\Omega_r(\alpha)$, respectively, reduce to:
\begin{itemize}
\item[(a)] $S_\ell(\alpha)$ and $S_r(\alpha)$ if the reflection coefficients vanish (reflectionless case);
\item[(b)] $\rho(\alpha)$ and  $\ell(\alpha)$ if there are no bound states.
\end{itemize}

\section{Auxiliary functions}\label{auxiliaryfunctions}

Let us now introduce, for $y \geq x$, the two pairs of unknown auxiliary functions
$$\bar{ {\bf K}}(x,y) \equiv\begin{pmatrix}\bar{K}^{up}(x,y)\\ \bar{K}^{dn}(x,y) \end{pmatrix}, \quad { {\bf K}}(x,y) \equiv\begin{pmatrix}K^{up}(x,y)\\ K^{dn}(x,y) \end{pmatrix},$$ \\
and, for $y \leq x$, the two other pairs of unknown auxiliary functions 
$$\bar{ {\bf M}}(x,y) \equiv\begin{pmatrix}\bar{M}^{up}(x,y)\\ \bar{M}^{dn}(x,y) \end{pmatrix}, \quad { {\bf M}}(x,y) \equiv\begin{pmatrix}M^{up}(x,y)\\ M^{dn}(x,y). \end{pmatrix} $$
Each of these pair of functions, given the initial solution, is the solution of a system of two structured Volterra integral equations \cite{DM2}, \cite{DM1},  \cite{VanDerMee2013}.

More precisely, in the focusing case, which is the case we are addressing in this paper,  for $y \geq x$, the unknown pair $(\bar{K}^{up}, \, \bar{K}^{dn})$ is the solution of the system 

\begin{equation}\label{nucleiKbarrati}
\begin{cases}
\bar{K}^\up(x,y)=-\displaystyle\int_x^\infty u_0(z)\bar{K}^\dn(z,z+y-x) \, dz,\\ 
\bar{K}^\dn(x,y)=\tfrac{1}{2}   \bar{u}_0(\tfrac{1}{2}(x+y))
+\displaystyle\int_x^{\tfrac{1}{2}(x+y)}   \bar{u}_0(z)\bar{K}^\up(z,x+y-z) \, dz, 
\end{cases} 
\end{equation} 
as well as the pair $({K}^{up}, \, {K}^{dn})$ it is of the system 
\begin{equation}
\begin{cases}
K^\up(x,y)=-\tfrac{1}{2} u_0(\tfrac{1}{2}(x+y))
-\displaystyle\int_x^{\tfrac{1}{2}(x+y)}\,u_0(z)K^\dn(z,x+y-z) \, dz, \\ 
K^\dn(x,y)=\displaystyle\int_x^\infty \,  \bar{u}_0(z)K^\up(z,z+y-x) \, dz.
\end{cases}
\end{equation}
Similarly, for $y \leq x$, the unknown pair $(M^{up}, M^{dn})$ is the solution of the system

\begin{equation}\label{nucleiM}
\begin{cases}
M^\up(x,y)=\displaystyle\int_{-\infty}^x u_0(z)M^\dn(z,z+y-x) \, dz \\ 
M^\dn(x,y)=-\tfrac{1}{2}  \bar{u}_0(\tfrac{1}{2}(x+y))
-\displaystyle\int_{\tfrac{1}{2}(x+y)}^x   \bar{u}_0(z)M^\up(z,x+y-z) \, dz \\ 
\end{cases}
\end{equation}
as well as the pair $(\bar{M}^{up}, \bar{M}^{dn})$ it is of the system
\begin{equation}\label{nucleiMbarrati}
\begin{cases}
\bar{M}^\up(x,y)=\tfrac{1}{2}u_0(\tfrac{1}{2}(x+y))
+\displaystyle\int_{\tfrac{1}{2}(x+y)}^x u_0(z)\bar{M}^\dn(z,x+y-z) \, dz \\  
\bar{M}^\dn(x,y)=-\displaystyle\int_{-\infty}^x   \bar{u}_0(z)\bar{M}^\up(z,z+y-x) \, dz.
\end{cases}
\end{equation}

From the computational point of view, it is important to note that  each auxiliary function is uniquely determined on the bisector $y=x$, by the initial solution or its partial integral energy.

In fact, setting $y=x$ in each of the four Volterra systems, we immediately obtain:
\begin{align}
\bar{K}^\dn(x,x)&= \frac 1 2   \bar{u}_0(x)\quad 
\bar{K}^\up(x,x)   =-\frac 12 \int_x^\infty |u_0(z)|^2 \, dz \label{propr1}\\
K^\up(x,x)&= -\frac 1 2 u_0(x), \quad 
K^\dn(x,x)=-\frac{1}{2}\int_x^\infty |u_0(z)|^2 \, dz, \label{propr2} \\
M^\dn(x,x) &=-\frac 1 2   \bar{u}_0(x),\quad  M^\up(x,x) =-\frac{1}{2} \int_{-\infty}^x |u_0(z)|^2 \, dz; \label{propr3} \\
\bar{M}^\up(x,x)&= \frac 1 2 u_0(x),\quad 
\bar{M}^\dn(x,x)= - \frac{1}{2}\int_{-\infty}^x |u_0(z)|^2 \, dz. \label{propr4}
\end{align}

\begin{remark}\label{remark1}
If $u_0(x)$ is real we need to solve uniquely \eqref{nucleiKbarrati} and \eqref{nucleiM} as ${K}^{up}(x,y)=-\bar{K}^{dn}(x,y)$ and ${K}^{dn}(x,y)=\bar{K}^{up}(x,y)$ as well as $\bar{M}^{up}(x,y)={M}^{dn}(x,y)$ and $\bar{M}^{dn}(x,y)=-{M}^{up}(x,y)$.
\end{remark}

\section{Initial Marchenko kernels and scattering matrix}\label{Marchenko}

Once the auxiliary functions have been computed, both $\Omega_\ell$ and $\Omega_r$  can be approximated by solving a Volterra integral equation. In fact, using the strong connection between $\Omega_\ell$ and  the function pairs $(\bar{K}^{up}, \bar{K}^{dn})$ and $(K^{up}, K^{dn})$ as well as that between  $\Omega_r$ and the function pairs 
$(\bar{M}^{up}, \bar{M}^{dn})$ and $(M^{up}, M^{dn})$, for $y \geq x \geq 0$, we have \cite{VanDerMee2013}

\begin{equation}\label{Mar_K}
\bar{K}^{dn}(x,y)+\zO_\ell(x+y)
+\displaystyle \int_x^\infty {K}^{dn}(x,z)\zO_\ell(z+y) \, dz=0, 
\end{equation}
and, for $y \leq x \leq 0$,
\begin{equation}\label{Mar_M}
M^{dn}(x,y)-\zO_r(x+y)
-\displaystyle\int_{-\infty}^x \bar{M}^{dn}(x,z)\zO_r(z+y) \, dz =0.
\end{equation}

Given the auxiliary vectors  $\bar{ K}$,  ${K}$, $\bar{M}$ and $M$,  relations \eqref{Mar_K}-\eqref{Mar_M} can be interpreted as Volterra integral equations having $\Omega_\ell$ and $\Omega_r$  as unknowns. 
\begin{remark}
We point out that, from the computational point of view, each initial Marchenko kernel can be treated as a function of only one variable, as we only have to deal with the sum of the two variables.
\end{remark}


Following the procedure proposed in \cite{VanDerMee2013}, the entries of the scattering matrix $S(\lambda)$ can be computed as follows:
\begin{align}
T(\lambda)& = \frac{1}{a_{\ell 4}(\lambda)}= \frac{1}{a_{r1}(\lambda)},\label{T}\\
L(\lambda)& =\frac{a_{\ell2}(\lambda)}{a_{\ell4}(\lambda)}=-\frac{a_{r2}(\lambda)}{a_{r1}(\lambda)},\label{L}\\
R(\lambda)&=\frac{a_{r3}(\lambda)}{a_{r1}(\lambda)}=-\frac{a_{\ell 3}(\lambda)}{a_{\ell 4}(\lambda)} \label{R}
\end{align}
where
\begin{align*}
\begin{cases}
a_{\ell1}(\lambda)&= 1-\displaystyle\int_{\RR^+} e^{-i \lambda z} \left( \displaystyle\int_{\RR} u_0(y) \overline{K}^\dn(y,y+z) dy \right) dz \\
a_{\ell2}(\lambda)&=-\displaystyle\int_{\RR} e^{2i \lambda y} u_0(y) dy -\displaystyle \int_{\RR} e^{i \lambda z} \left(\displaystyle \int_{-\infty}^{\frac z2 } u_0(y) {K}^{dn}(y,z-y) dy \right) dz \\
a_{\ell3}(\lambda)&=\displaystyle \int_{\RR} e^{-2i \lambda y}   \bar{u}_0(y) dy + \displaystyle \int_{\RR} e^{-i \lambda z} \left(\displaystyle\int_{-\infty}^{\frac z2 }   \bar{u}_0(y) \overline{K}^{up}(y,z-y) dy \right) dz \\
a_{\ell4}(\lambda)&= 1+\displaystyle\int_{\RR^+} e^{i \lambda z} \left(\displaystyle \int_{\RR}   \bar{u}_0(y) K^\up(y,y+z) dy \right) dz
\end{cases} 
\end{align*}
\begin{align*}
\begin{cases}
a_{r1}(\lambda)&= 1+\displaystyle\int_{\RR^+} e^{i \lambda z} \left(\displaystyle \int_{\RR} u_0(y) M^\dn(y,y-z) dy \right) dz \\
a_{r2}(\lambda)&=\displaystyle\int_{\RR} e^{2i \lambda y} u_0(y) dy +\displaystyle \int_{\RR} e^{i \lambda z} \left(\displaystyle\int_{\frac{z}{2}}^{+\infty} u_0(y) \overline{M}^{dn}(y,z-y) dy \right) dz \\
a_{r3}(\lambda)&=-\displaystyle\int_{\RR} e^{-2i \lambda y}   \bar{u}_0(y) dy -\displaystyle \int_{\RR} e^{-i \lambda z} \left(\displaystyle\int_{\frac{z}{2}}^{+\infty}   \bar{u}_0(y) M^{up}(y,z-y) dy \right) dz \\
a_{r4}(\lambda)&= 1-\displaystyle\int_{\RR^+} e^{-i \lambda z} \left( \displaystyle\int_{\RR}   \bar{u}_0(y) \overline{M}^\up(y,y-z) dy \right) dz. 
\end{cases}
\end{align*}

\begin{remark}
If the solution is a soliton or a multisoliton, the only spectral data to be computed are the coefficients and the parameters of the spectral sums $S_\ell(\alpha)$ and $S_r(\alpha)$. The coefficients and parameters of $S_\ell(\alpha)$ can be computed by applying the matrix-pencil method recently proposed in  \cite{FeVanSe2013} to a sufficiently large set of equispaced data of $\Omega_\ell(\alpha)$. 
\end{remark}

%
%
%

\section{Computational strategy}\label{computational}
In this section we propose a numerical procedure to evaluate the initial scattering data in the case 
\begin{equation}\label{cond_u0}
u_0(x)=0, \quad \textit{for} \quad |x|> L.
\end{equation} 
We note that \eqref{cond_u0} can be considered acceptable whenever $u_0(x) \to 0$ for $|x| \to \infty$, provided that $L$ be large enough.

Assuming for computational simplicity that the reflection coefficients are zeros, we must solve:
\begin{itemize}
\item[1.] the systems of Volterra integral equations \eqref{nucleiKbarrati}-\eqref{nucleiMbarrati};
\item[2.] the Volterra integral equations \eqref{Mar_K}-\eqref{Mar_M};
\item[3.] a nonlinear approximation problem \cite{FeVanSe2013}.
\end{itemize} 

\subsection{Auxiliary functions computation} 
Hereafter we assume $u_0$ to be real, though the algorithms remain essentially the same in the complex case. In this case (Remark \ref{remark1}) we need to solve only systems \eqref{nucleiKbarrati} and \eqref{nucleiM}, instead of systems \eqref{nucleiKbarrati}-\eqref{nucleiMbarrati}.

Let us first consider system \eqref{nucleiKbarrati} to identify the supports of $\bar{K}^{up}(x,y)$ and $\bar{K}^{dn}(x,y)$.

\begin{lemma}\label{lemmaK}
Under hyphotesis \eqref{cond_u0}, the following properties hold true:
\begin{itemize}
\item[(1)] For $x>L$, $\bar{K}^{up}(x,y)$ and $\bar{K}^{dn}(x,y)$ are both zero;
\item[(2)] If $x \leq L$ and $x+y \geq 2L$ then $\bar{K}^{up}(x,y)=\bar{K}^{dn}(x,y)=0$;
\item[(3)] If $x<-L$ and $y-x>4L$ then $\bar{K}^{up}(x,y)=0$;
\item[(4)] If $x<-L$ and $x+y<2L$ then $\bar{K}^{dn}(x,y)=0$.
\end{itemize}
\end{lemma}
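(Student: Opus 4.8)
The plan is to read off each vanishing statement directly from the coupled Volterra system \eqref{nucleiKbarrati}, using only the support hypothesis \eqref{cond_u0} (so that $\bar u_0$ vanishes outside $[-L,L]$) together with the elementary arithmetic linking the two arguments of the kernels. Two of the four items follow from a single substitution, while the remaining two are genuinely coupled and must be handled by an invariant-subspace (equivalently, successive-approximation) argument.

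First I would dispose of items (1) and (4), which need no information about the solution itself. For (1), if $x>L$ then in the first line of \eqref{nucleiKbarrati} the whole integration range $z\ge x$ lies beyond the support of $u_0$, so $\bar K^{up}(x,y)=0$; in the second line both the free term $\tfrac12\bar u_0(\tfrac12(x+y))$ (since $\tfrac12(x+y)\ge x>L$) and the integrand (since $z\ge x>L$) vanish, so $\bar K^{dn}(x,y)=0$. Item (4) is of the same direct type: the free term is zero precisely when $|x+y|>2L$, and the integration interval $[x,\tfrac12(x+y)]$ lies entirely to the left of $-L$, as soon as $x+y<-2L$; both contributions to $\bar K^{dn}$ then vanish. (The computation in fact requires $x+y<-2L$, which is presumably the intended hypothesis, since for $x\ge -L$ one always has $x+y\ge -2L$ and for $x<-L$ the free term $\tfrac12\bar u_0(\tfrac12(x+y))$ is generically nonzero once $\tfrac12(x+y)\in(-L,L)$.)

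The core is item (2), where the vanishing of $\bar K^{up}$ feeds the equation for $\bar K^{dn}$ and vice versa, so a naive substitution is circular. To break the circularity I would pass to the fixed-point formulation: by the Volterra theory of the cited references, system \eqref{nucleiKbarrati} has a unique solution, realized as the limit of the successive approximations $T^n(0)$ for the associated contraction $T$ on the relevant Banach space. Let $Z$ be the closed subspace of kernel pairs vanishing (a.e.) on all regions listed in the lemma. It then suffices to verify $T(Z)\subseteq Z$, which is now non-circular: assuming the input already vanishes on those regions, one substitutes into the right-hand sides and checks the output does too. For the $\bar K^{dn}$-component in region (2) the integration effectively runs over $z\le L$ (where $\bar u_0$ may be nonzero), and there the inner argument obeys $z+(x+y-z)=x+y\ge 2L$ with $z\le L$, so the input $\bar K^{up}$ is of type (2) and vanishes, while the free term vanishes as $\tfrac12(x+y)\ge L$. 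Symmetrically, for the $\bar K^{up}$-component the integration effectively runs over $z\in[\max(x,-L),L]$, and using $z\ge x$ one gets $z+(z+y-x)=2z+y-x\ge x+y\ge 2L$ with $z\le L$, so the input $\bar K^{dn}$ is again of type (2). Since $Z$ is closed, $T$-invariant, and $0\in Z$, the unique fixed point lies in $Z$, which gives (2).

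Finally, item (3) is a corollary of (2): in the first line of \eqref{nucleiKbarrati} with $x<-L$ the integral only sees $z\in[-L,L]$, and for such $z$ the inner argument satisfies $z\le L$ and $2z+y-x>-2L+4L=2L$ (using $z\ge -L$ and $y-x>4L$), so $\bar K^{dn}(z,z+y-x)$ vanishes by (2); hence $\bar K^{up}(x,y)=0$. I expect the only real obstacle to be the coupling in (2); everything else is bookkeeping on supports, with the measure-zero boundary cases (such as $|z|=L$ or $x+y=2L$) absorbed by the $L^1$/continuity framework in which the contraction acts.
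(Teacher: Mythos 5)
Your proof is correct, and on the only genuinely delicate item it takes a different route from the paper's. For (1), (3) and (4) your support bookkeeping coincides with the paper's own argument: (1) by direct substitution, (3) as a corollary of (2) via the estimate $2z+y-x>2L$ for $z\in[-L,L]$. Your remark about (4) is well taken: under the stated hypothesis $x+y<2L$ the claim is false (take $x<-L$ and $x+y=0$, so the free term $\tfrac12\bar u_0(0)$ survives), and the paper's one-line proof of (4) --- which asserts $u_0(\tfrac12(x+y))=0$ --- tacitly uses the corrected condition $x+y<-2L$, exactly as you do. The divergence is in item (2). The paper proves it with the same device it later turns into its numerical algorithm: it collocates \eqref{nucleiKbarrati} at the nodes $(x_n,x_{n+2})$, replaces the integrals by the composite trapezoidal rule, observes that the resulting homogeneous $2\times2$ linear systems are nonsingular and hence have only the zero solution, propagates this recursively through the nodes $(x_j,x_{2n+2\ell-j})$, and finally invokes the arbitrariness of $h$. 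You instead argue at the level of the exact solution: restricted to the region $\{x\le L,\ x+y\ge 2L\}$ the pair satisfies a homogeneous Volterra system, which you encode as invariance of a closed subspace $Z$ under the successive-approximation operator $T$, so the unique fixed point lies in $Z$. Your version is the more rigorous of the two: the paper's recursion, read literally, shows only that the quadrature approximants vanish, and closing the gap to the exact solution requires a (tacit) convergence argument for the collocation scheme; your invariant-subspace argument needs nothing beyond the standard existence theory by iteration that the cited references supply (with the caveat that ``contraction'' should be read as convergence of the Neumann/Picard series, which is what Volterra structure actually gives). What the paper's phrasing buys in exchange is that it doubles as the consistency statement for the computational strategy of the same section: the very scheme used in the algorithm automatically reproduces the vanishing pattern, which is presumably why the authors argued that way.
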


\begin{proof}
To prove (1) it is sufficient to note that, as $y \geq x$ and then $\frac{1}{2}(x+y) \geq 0$, $u_0(\frac{1}{2}(x+y))=0$ as well as $u_0(z)=0$ for $z \in (x, \frac{1}{2}(x+y))$. About (2), we consider the sequence $x_i=ih$, $i=0,\pm 1, \pm 2,...$ where $h=\frac{L}{n}$, $n \in \mathcal{N}$, and then collocate system \eqref{nucleiKbarrati} in the node $(x_n, x_{n+2})$. Noting that both $u_0(\frac{1}{2}(x+y))=0$ and $u_0(z)=0$ for $z \geq x_{n+1}$, we can write:
\begin{equation}
\begin{cases}
\bar{K}^\up(x_n,x_{n+2})=-\displaystyle\int_{x_n}^{x_{n+1}} u_0(z)\bar{K}^\dn(z,z+2h) \, dz,\\ 
\bar{K}^\dn(x_n,x_{n+2})= \displaystyle\int_{x_n}^{x_{n+1}} u_0(z)\bar{K}^\up(z,x_{2n+2}-z) \, dz.
\end{cases} 
\end{equation} 
Applying then the composite trapezoidal quadrature formula to the computation of the two integrals, and denoting by $\bar{K}^{up}_{n,n+r}$ and $\bar{K}^{dn}_{n,n+r}$, the approximate values of $\bar{K}^{up}(x_n,x_{n+r})$ and $\bar{K}^{dn}(x_n,x_{n+r})$, respectively, we obtain the nonsingular homogeneous system 
\begin{align*}
\begin{cases}
\bar{K}^\up_{n,n+2}+\frac{h}{2} \, u_0(x_n)\bar{K}^\dn_{n,n+2}=0 \\ 
-\frac{h}{2} \, u_0(x_n)\bar{K}^\up_{n,n+2}+\bar{K}^\dn_{n,n+2}=0,
\end{cases} 
\end{align*}
whose solution is $\bar{K}^{up}_{n,n+2}=\bar{K}^{dn}_{n,n+2}=0$, for any fixed $h$ value. Applying recursively the same procedure to the nodal points $(x_j, x_{2n+2\ell-j})$, with $j=n-1,n-2,...$ and $\ell=0,1,2,...$ and considering that $h$ is arbitrary, the result follows immediately. \\
For (3) we note that (2) implies that $\bar{K}^{dn}(z,z+y-x)=0$, as $z \in [-L,L]$ and $y-x>4L$, that is $2z+y-x>2L$.\\ Result (4) is immediate, as the integration domain appearing in the second equation of \eqref{nucleiKbarrati} is null and $u_0(\frac{1}{2}(x+y))=0$.
\end{proof}

Taking into account the above properties, we can say that the supports of the auxiliary functions are those represented in Figure \ref{support1}.
\begin{figure}[t]
\includegraphics[scale=0.36]{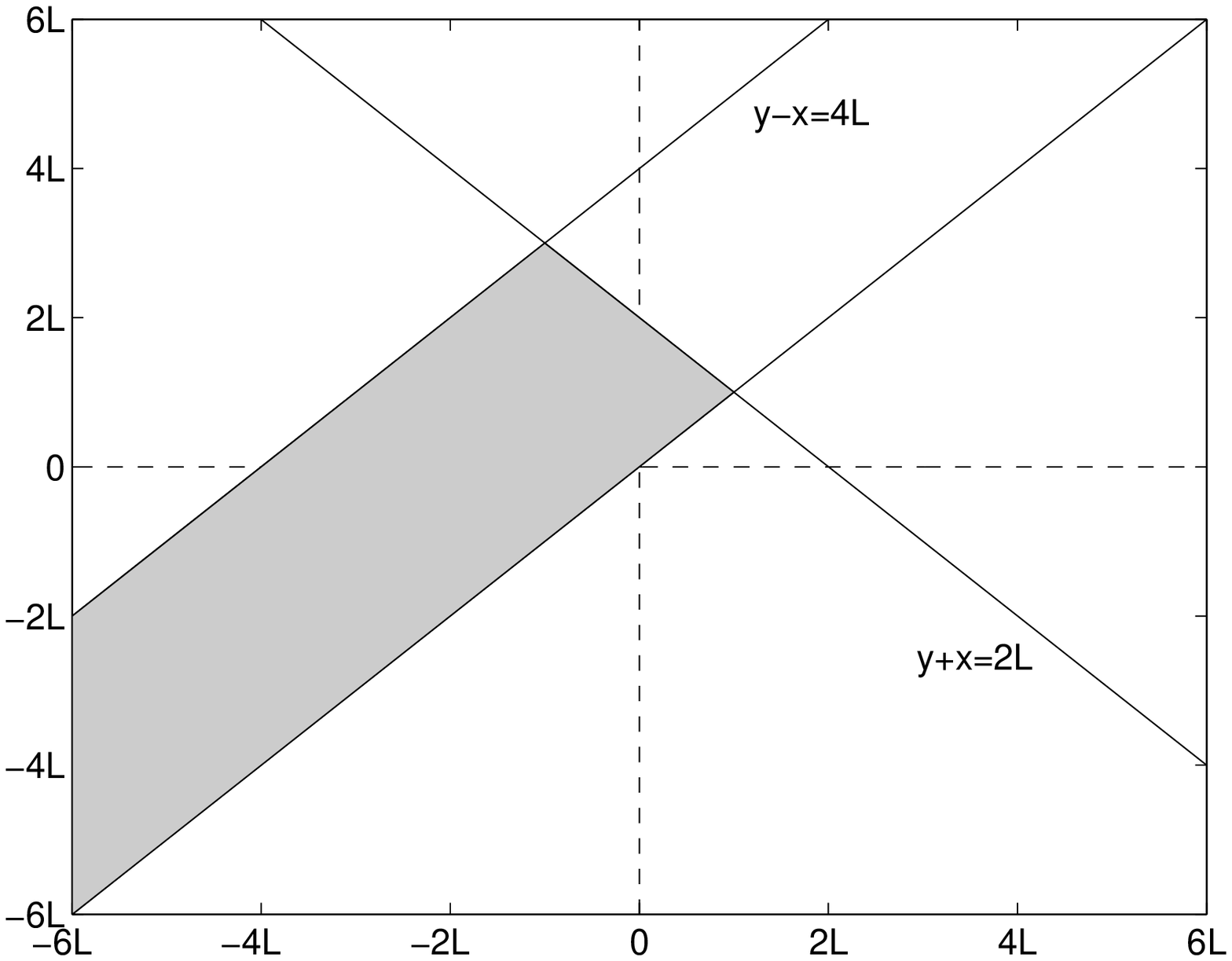}
\includegraphics[scale=0.36]{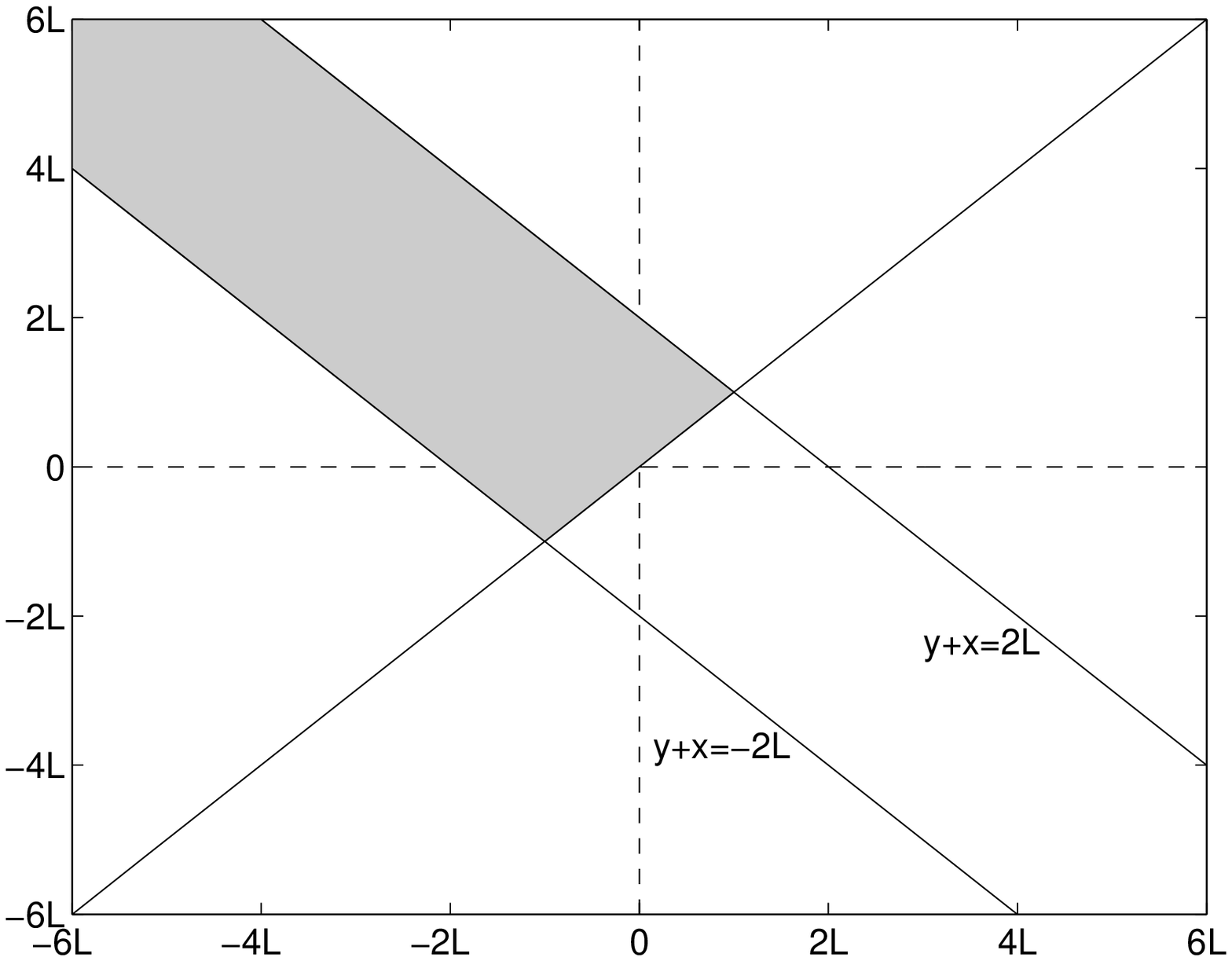}
\caption{\small Supports of the auxiliary functions $\bar{K}^{up}$ \label{support1} (to the left) and $\bar{K}^{dn}$ (to the right)}
\end{figure}

For the numerical solution of system \eqref{nucleiKbarrati}, the following properties are also important:
\begin{itemize}
\item[1.] If $x \leq -L$, whatever $h$, $\bar{K}^{up}(x,y)$ is constant on the line $y=x+h$. This can be seen taking into account that $u_0 \in [-L,L]$ and noting that
$$ \bar{K}^{up}(x,x+h)=-\int_{-L}^L u_0(z) \bar{K}^{dn}(z,z+h) dz $$
only depends on $h$. For this reason we put $\bar{K}^{up}(x,x+h)= {\mathcal{C}}^{up}_{\bar{K},h}$ for each given value $h$.
\item[2.] If $x<-L$ and $x+y>-2L$, $\bar{K}^{dn}(x,y)$ is constant on each line $x+y=-2(L+h)$ for each $0<h<2L$. In fact, by the second equation of \eqref{nucleiKbarrati} it results that 
$$ \bar{K}^{dn}(x,y)=\frac{1}{2}u_0(-L+h)+\int_{-L}^{L+h} u_0(z) \bar{K}^{up}(z,-2(L+h)-z) dz = {\mathcal{C}}^{dn}_{\bar{K},h}.$$
\end{itemize}
\begin{figure}[t]
\includegraphics[scale=0.36]{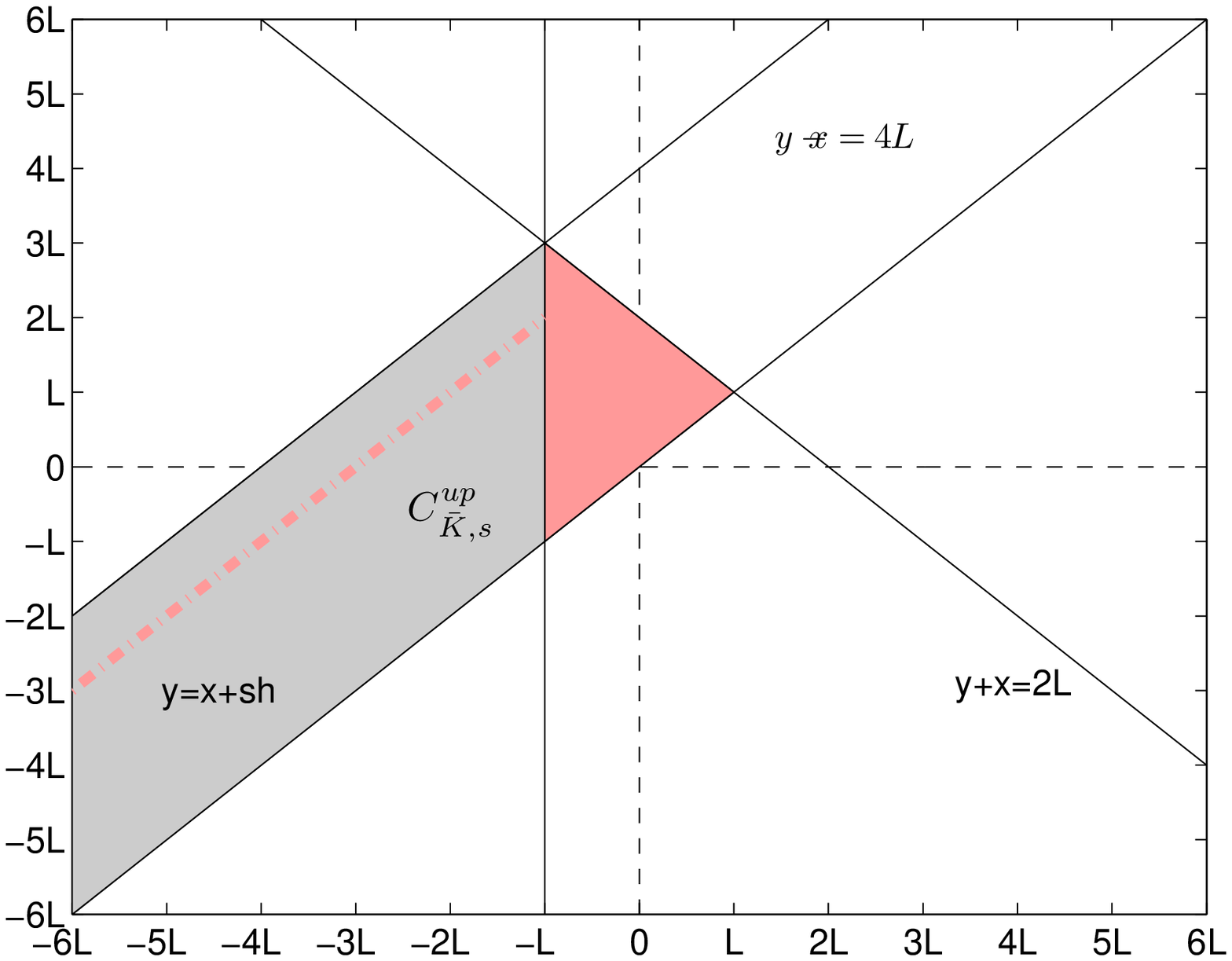}
\includegraphics[scale=0.36]{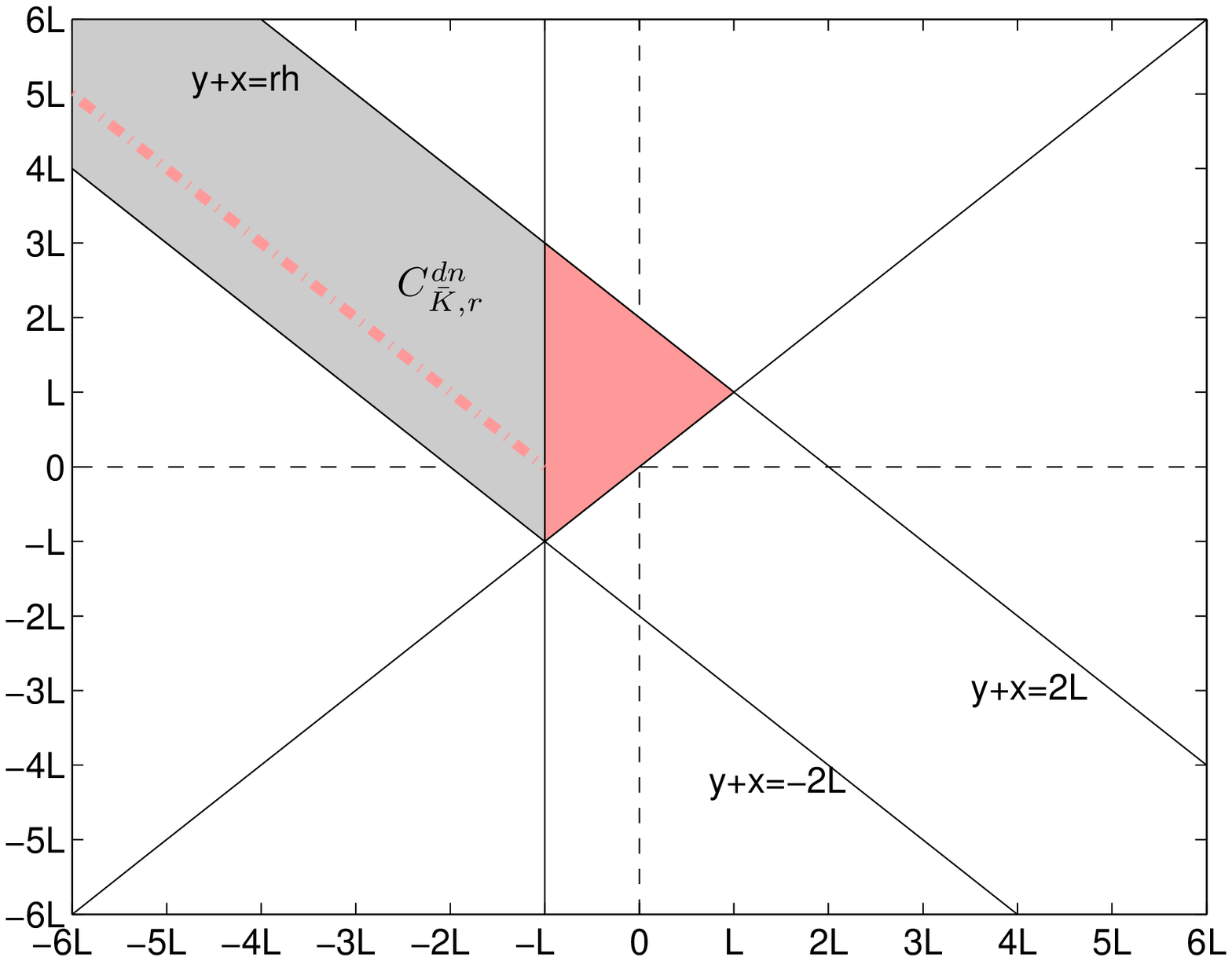}
\caption{Additional properties of $\bar{K}^{up}$ (to the left) and $\bar{K}^{dn}$ (to the right)   \label{property1}}
\end{figure}
These two results are grafically represented in Figure \ref{property1}.

A visualization of the area where we need to compute $\bar{K}^{up}$ and $\bar{K}^{dn}$ is given by the orange triangle represented in  Figure \ref{square}. In the remaining areas of the respective supports their values are immediately obtained by using those of the orange triangle. The orange line shows, in particular, the values of the orange triangle we use to know $\bar{K}^{up}$ and $\bar{K}^{dn}$ in the orange point of the gray area.
\begin{figure}[t]
\includegraphics[scale=0.5]{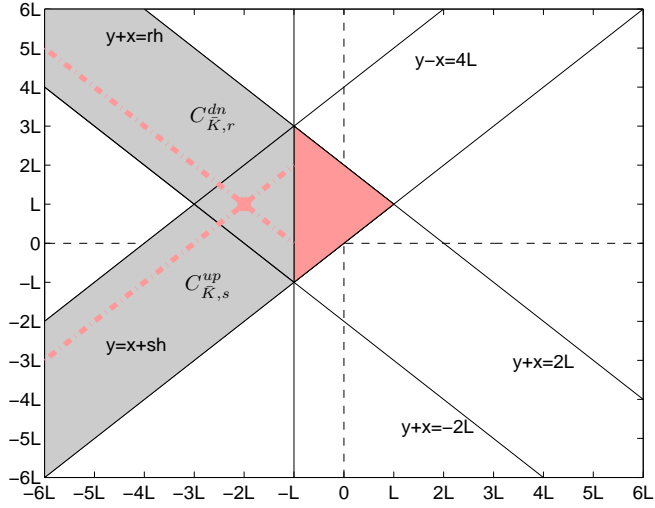}
\caption{Geometrical visualization of the computational area \label{square}}
\end{figure}

Fixed $n \in \mathbb{N}$, let $x_i=ih$ $i=-n,-n+1,...,n$ and $y_j=jh$ with $h=\frac{L}{n}$, \, $j=-n,-n+1,...,3n$. The algorithm first requires the computation of $\bar{K}^{up}$ and $\bar{K}^{dn}$ in the nodal points of the bisector $(x_i,y_j)$ of the orange triangle.
Recalling \eqref{propr1} and denoting by $\bar{K}^{up}_{r,s}$ and $\bar{K}^{dn}_{r,s}$ the approximation of $\bar{K}^{up}(x,y)$ and $\bar{K}^{dn}(x,y)$ in the nodal points $(x_r,y_s)$, we can write:
\begin{align*}
\bar{K}^\dn_{n-i+1,n-i+1}&=\frac 12 u_{0,n-i+1}, \\
\bar{K}^\up_{n-i+1,n-i+1} &= -\frac 12 \int_{{x}_{n-i+1}}^L |u_0(z)|^2 \, dz, i=1,2,...,2n+1.
\end{align*}

The approximation of $\bar{K}^{up}_{n-i+1,n-i+1}$ is then obtained recursively by applying the composite Simpson's quadrature formula.
Fixed $j=1,2,\dots,2n$, system  \eqref{nucleiKbarrati} is collocated in the nodal points 
$$D_j=\{ (x_{n-j-i+1}, x_{n+j-i+1}) \, \textrm{with}  \quad \forall i=1,\ ...\ , \ 2n+1 \}.$$

We note that in each collocation point we have to compute $\bar{K}^{up}$ and $\bar{K}^{dn}$, given their values in the bisector $y=x$. For this reason, recalling that $\bar{K}^{up}(x_i,y_i)$ and $\bar{K}^{dn}(x_i,y_i)$ are zero for $x_i+y_j>2L$, we collocate system  \eqref{nucleiKbarrati} following the order depicted in Figure \ref{collocazione}.

Our numerical algorithm is based on the approximation of the integrals \eqref{nucleiKbarrati} by means of the composite trapezoidal formula. In this way, following the ordering depicted in Figure \ref{collocazione}, for each collocation point we have only to solve recursively a sequence of nonsingular $2 \times 2$ linear systems. In fact, for each collocation point, we must solve the following system:
\begin{align*}
\begin{cases}
\bar{K}^{up}_{n-j-i+1,n+j-i+1} + \frac{h}{2} u_{0,n-j-i+1} \bar{K}^{dn}_{n-j-i+1,n+j-i+1} \\ \hspace{3cm}= -h(1-\delta_{i-1,0}) \displaystyle \sum_{\ell=1}^{i-1} u_{0,n-j-i+1+\ell} \bar{K}^{dn}_{n-j-i+\ell+1,n+j-i+\ell+1} \\ 
- \frac{h}{2} u_{0,n-j-i+1} \bar{K}^{up}_{n-j-i+1,n+j-i+1} + \bar{K}^{dn}_{n-j-i+1,n+j-i+1} \\ \hspace{3cm} = \dfrac{u_{0,n-i+1}}{2}+\frac{h}{2} \displaystyle \sum_{\ell=1}^{j} (2-\delta_{\ell j}) u_{0,n-j-i+\ell+1} \bar{K}^{up}_{n-j-i+\ell+1,n+j-i-\ell+1}.
\end{cases}
\end{align*}

\begin{figure}[t!]
\includegraphics[scale=0.33]{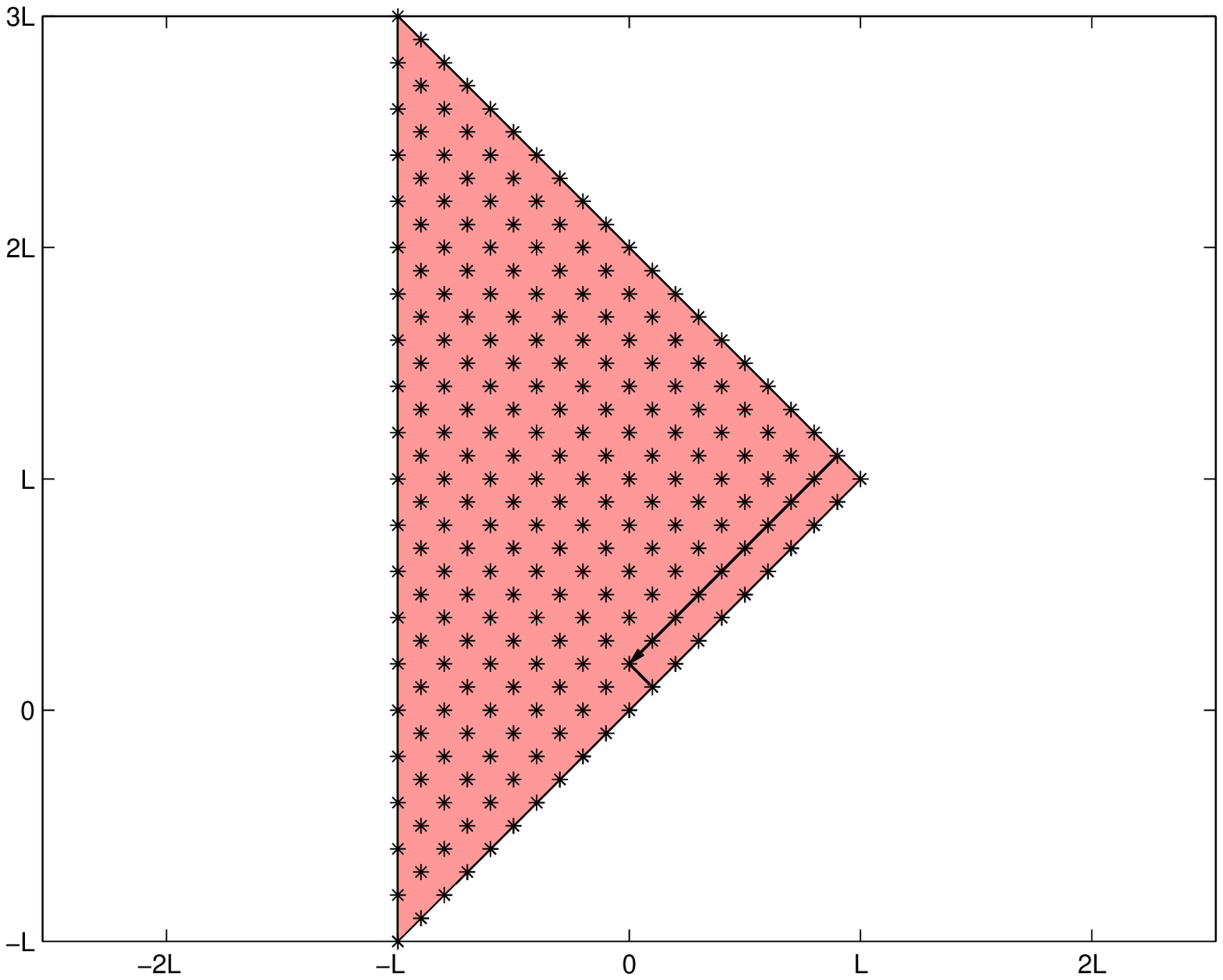} 
\includegraphics[scale=0.33]{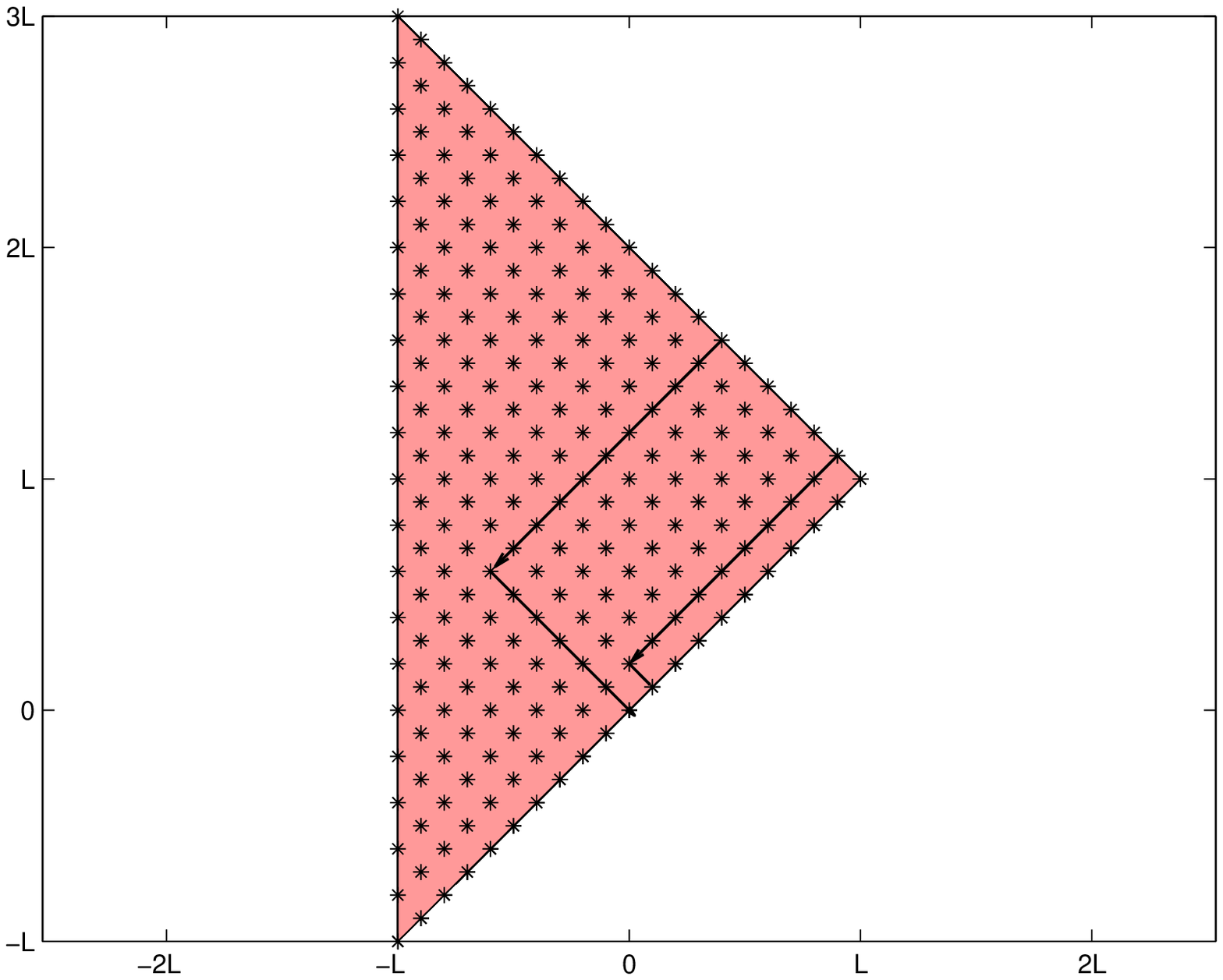} 
\caption{Sorting visualization of collocation points  in the orange trangle \label{collocazione}}
\end{figure}

Let us now consider system \eqref{nucleiM} and identify the supports of the auxiliary functions $M^{up}$ and $M^{dn}$.
Taking into account hypothesis \eqref{cond_u0}, they are characterized by the following 

\begin{lemma}\label{lemmaM}
Under hypothesis \eqref{cond_u0}, the following statements hold true:
\begin{itemize}
\item[(1)] For $x<-L$ both $M^{up}(x,y)$ and $M^{dn}(x,y)$ are zero;
\item[(2)] For $x \geq -L$ and $x+y \leq -2L$, $M^{up}(x,y)=M^{dn}(x,y)=0$;
\item[(3)] If $x>L$ and $y-x<-4L$ then $M^{up}(x,y)=0$;
\item[(4)] If $x>L$ and $y+x>2L$, then $M^{dn}(x,y)=0$.
\end{itemize} 
\end{lemma}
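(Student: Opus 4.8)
The plan is to mirror the proof of Lemma~\ref{lemmaK}, exploiting the structural symmetry between \eqref{nucleiKbarrati} and \eqref{nucleiM} obtained by interchanging $+\infty$ with $-\infty$, replacing the interval $[x,\tfrac12(x+y)]$ by $[\tfrac12(x+y),x]$, and using the diagonal values \eqref{propr3} in place of \eqref{propr1}. Statements (1), (3) and (4) are then read off directly from the two integral equations together with the support condition \eqref{cond_u0}, and only (2) requires the discrete, collocation-based argument.

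For (1), let $x<-L$. In the first equation of \eqref{nucleiM} the variable $z$ runs over $(-\infty,x)\subset(-\infty,-L)$, where $u_0\equiv 0$, so $M^\up(x,y)=0$; in the second equation $\tfrac12(x+y)\le x<-L$ gives $\bar u_0(\tfrac12(x+y))=0$, while the integration interval $[\tfrac12(x+y),x]$ again lies in $(-\infty,-L)$, so $M^\dn(x,y)=0$. Statement (4) is equally direct: if $x>L$ and $x+y>2L$ then $\tfrac12(x+y)>L$, so the boundary term vanishes and the whole interval $[\tfrac12(x+y),x]$ sits in $(L,+\infty)$, giving $M^\dn(x,y)=0$. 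For (3), with $x>L$ and $y-x<-4L$, the first equation only sees $z\in[-L,L]$; for each such $z$ the point $(z,z+y-x)$ satisfies $z\ge-L$ and $z+(z+y-x)=2z+y-x<2L-4L=-2L$, so part~(2) forces $M^\dn(z,z+y-x)=0$ and hence $M^\up(x,y)=0$.

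The crux is (2), which I would establish by the discrete argument of Lemma~\ref{lemmaK}(2). Set $x_i=ih$ with $h=L/n$ and sweep the region $x\ge-L$, $x+y\le-2L$ node by node in order of increasing first coordinate, starting at the corner $(x_{-n},x_{-n-2})=(-L,-L-2h)$, which lies on $x+y=-2L-2h$. Discretizing both integrals of \eqref{nucleiM} by the composite trapezoidal rule, at a generic node $(x_j,y)$ the sample $z=x_j$ in the first equation is $M^\dn(x_j,y)$ and that in the second equation is $M^\up(x_j,y)$, while every other sample is either annihilated by $u_0(z)=0$ (when $z<-L$) or is a value of $M^\dn$ or $M^\up$ at a region point of strictly smaller first coordinate, already set to zero; indeed the off-diagonal points $(z,z+y-x_j)$ and $(z,x_j+y-z)$ satisfy $z\ge-L$ together with $2z+y-x_j<x_j+y\le-2L$ and $z+(x_j+y-z)=x_j+y\le-2L$, respectively. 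What survives (at the corner, where the off-diagonal samples vanish outright) is the homogeneous system
\begin{equation*}
\begin{cases}
M^\up_{-n,-n-2}-\tfrac{h}{2}\,u_0(x_{-n})\,M^\dn_{-n,-n-2}=0,\\
\tfrac{h}{2}\,\bar u_0(x_{-n})\,M^\up_{-n,-n-2}+M^\dn_{-n,-n-2}=0,
\end{cases}
\end{equation*}
whose determinant $1+\tfrac{h^2}{4}|u_0(x_{-n})|^2$ is nonzero, so $M^\up_{-n,-n-2}=M^\dn_{-n,-n-2}=0$; the same nonsingular $2\times2$ system recurs at every node, and since $h$ is arbitrary the continuous functions vanish on the whole region.

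The step I expect to be the real obstacle is precisely (2): unlike (1), (3), (4), the first equation of \eqref{nucleiM} is genuinely self-referential on that region, since the integrand $M^\dn(z,z+y-x)$ is not covered by an already-settled case, and so direct inspection fails. The discretization turns this Volterra self-reference into a triangular cascade of invertible $2\times2$ systems, and the only delicate point is the passage from ``the trapezoidal approximation vanishes for every $h$'' to ``the exact solution vanishes,'' which rests on the convergence of the quadrature to the unique solution of the Volterra system---exactly the (tacit) ingredient already used in Lemma~\ref{lemmaK}.
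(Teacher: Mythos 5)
Your proof is correct and takes essentially the same route as the paper's: parts (1), (3) and (4) are handled by direct inspection of \eqref{nucleiM} under \eqref{cond_u0} (with (3) reduced to (2)), and part (2) by collocating at $(x_{-n},x_{-n-2})$, applying the trapezoidal rule, and propagating the resulting nonsingular homogeneous $2\times 2$ systems over the nodes, invoking the arbitrariness of $h$. If anything, your writeup is more explicit than the paper's, which leaves both the nonsingularity of the $2\times 2$ systems and the discrete-to-continuous passage tacit.
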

\begin{proof}
About (1) it is enough to note that $\frac{1}{2}(x+y)<-L$ and then $M^{up}(x,y)=M^{dn}(x,y)=0$. \\ In order to prove (2), we observe that collocating system \eqref{nucleiM} in $(x_{-n}, x_{-n-2})$ and taking into account that $u_0(\frac{1}{2}(x+y))=0$ and $u_0(z)=0$ for $z<x_{-n}$, one can write
\begin{align*}
\begin{cases}
{M}^\up_{-n,-n-2}-\frac{h}{2} \, u_0(x_n){M}^\dn_{-n,-n-2}=0 \\ 
\frac{h}{2} \, u_0(x_{-n}){M}^\up_{-n,-n-2}+{M}^\dn_{-n,-n-2}=0,
\end{cases} 
\end{align*}
whose solution is $M^{up}_{-n,-n-2}=M^{dn}_{-n,-n-2}$ for any given $h$ value. Iterating the procedure in the nodal points $(x_{-j},x_{-j-2\ell})$ with $j=n-1,n-2,...$ and $\ell=1,2,...$ and considering that $h$ is arbitrarily, we obtain that $M^{up}_{-j,-j-2\ell}=M^{dn}_{-j,-j-2\ell}=0$, for the above fixed value of $j$ and $\ell$.\\ About the assertion (3), it is sufficient to note that, as the domain of integration is $[-L,L]$ and $2z+y-x<-2L$, $M^{up}(x,y)=0$ as a consequence of (2). \\
Finally (4) is immediate considering that $M^{dn}(x,y)=0$ for $x+y>2L$ as well as $u_0(\frac{1}{2}(x+y))$.
\end{proof}

A geometrical representation of these supports is given in Figure \ref{support2}. 
\begin{figure}[t!]
\includegraphics[scale=0.36]{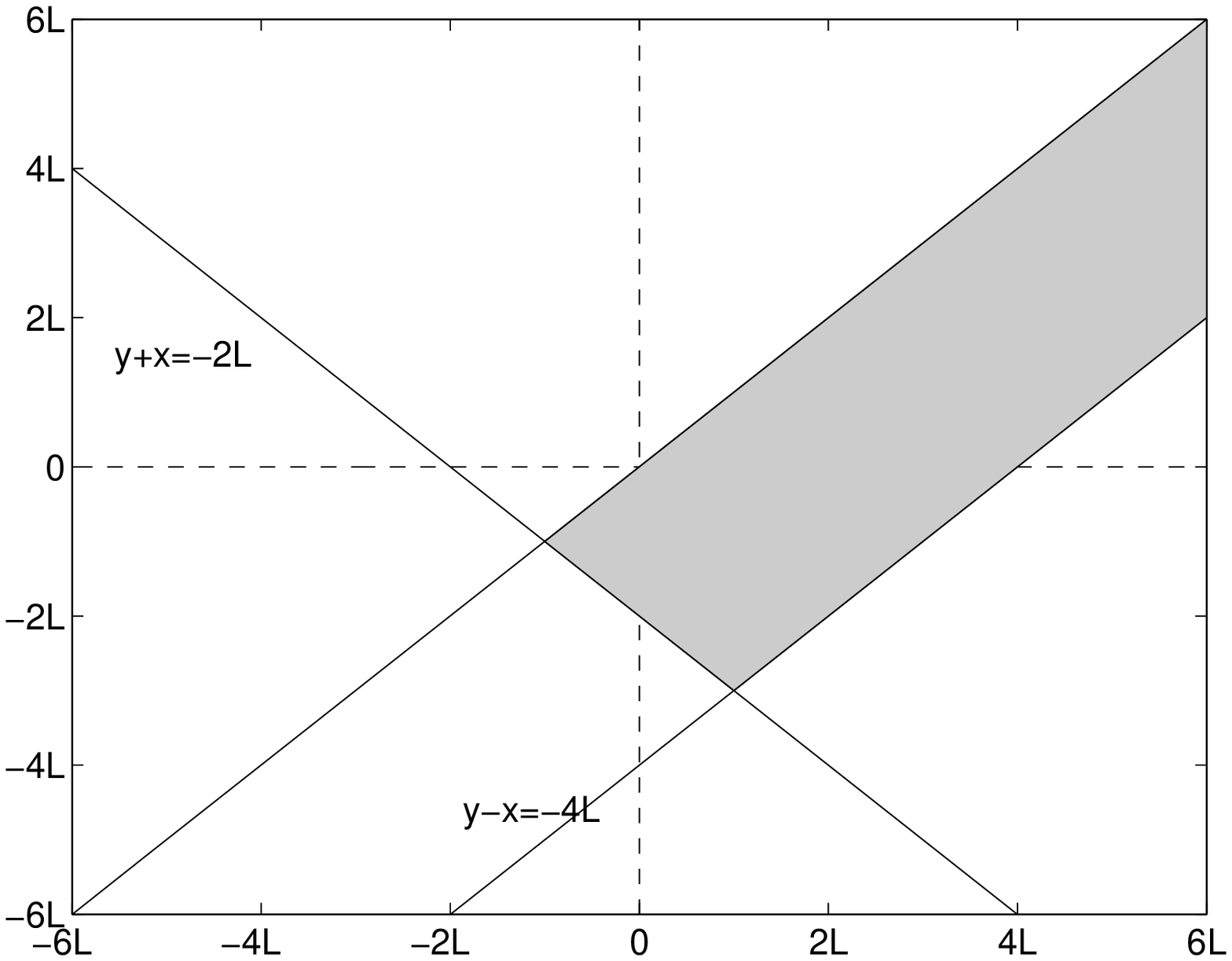}
\includegraphics[scale=0.36]{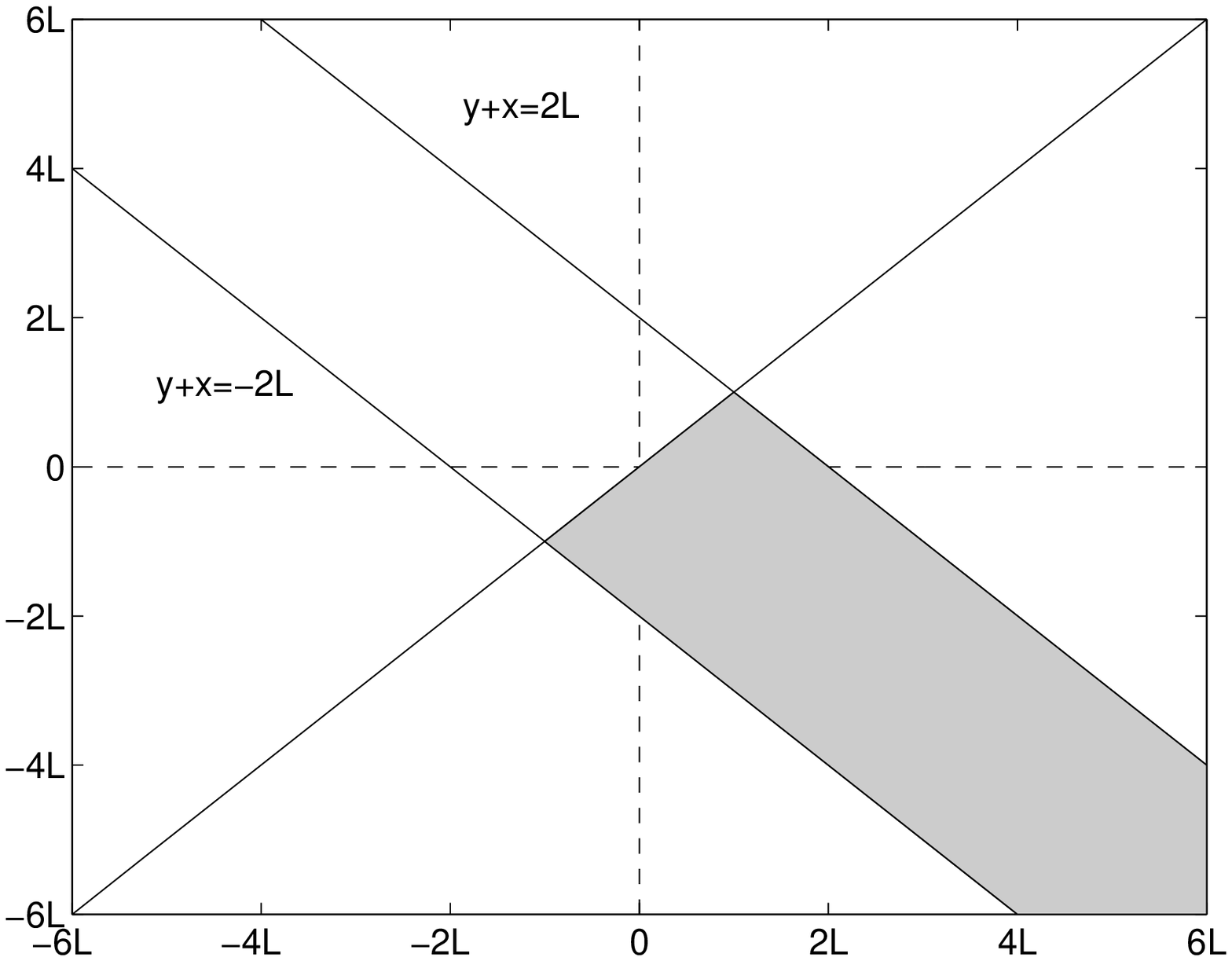}
\caption{\small Supports of the auxiliary functions ${M}^{up}$ (to the left) and ${M}^{dn}$ (to the right) \label{support2}}
\end{figure}

Moreover, in order to find the numerical solution of \eqref{nucleiM}, it is important to note that the unknown functions $M^{up}$ and $M^{dn}$ have the following further properties, represented in Figure \ref{property2}:
\begin{itemize}
\item[1.] If $x \geq L$, whatever $h$, $M^{up}$ is constant on the line $y=x+h$. This property can immediately be verified, noting that
$$ M^{up}(x,y)=\int_{-L}^L u_0(t) M^{dn}(z,z+h) dz$$
only depends on $h$. Hence, to make evident this property, for $x \geq L$, we write $M^{up}(x,y)={\mathcal{C}}^{up}_{M,h}$. 
\item[2.] If $x \geq L$ and $x+y<2L$, $M^{dn}$ is constant on each line $x+y=2(L-h)$, for $0<h<2L$. In fact using the second equation of \eqref{nucleiM}, we have
$$M^{dn}(x,y)=-\frac{1}{2} u_0(L-h)-\int_{L-h}^{L} u_0(z) M^{up}(z,2(L-h)-z) dz= {\mathcal{C}}^{dn}_{M,h}.$$
\end{itemize}
\begin{figure}[t!]
\includegraphics[scale=0.36]{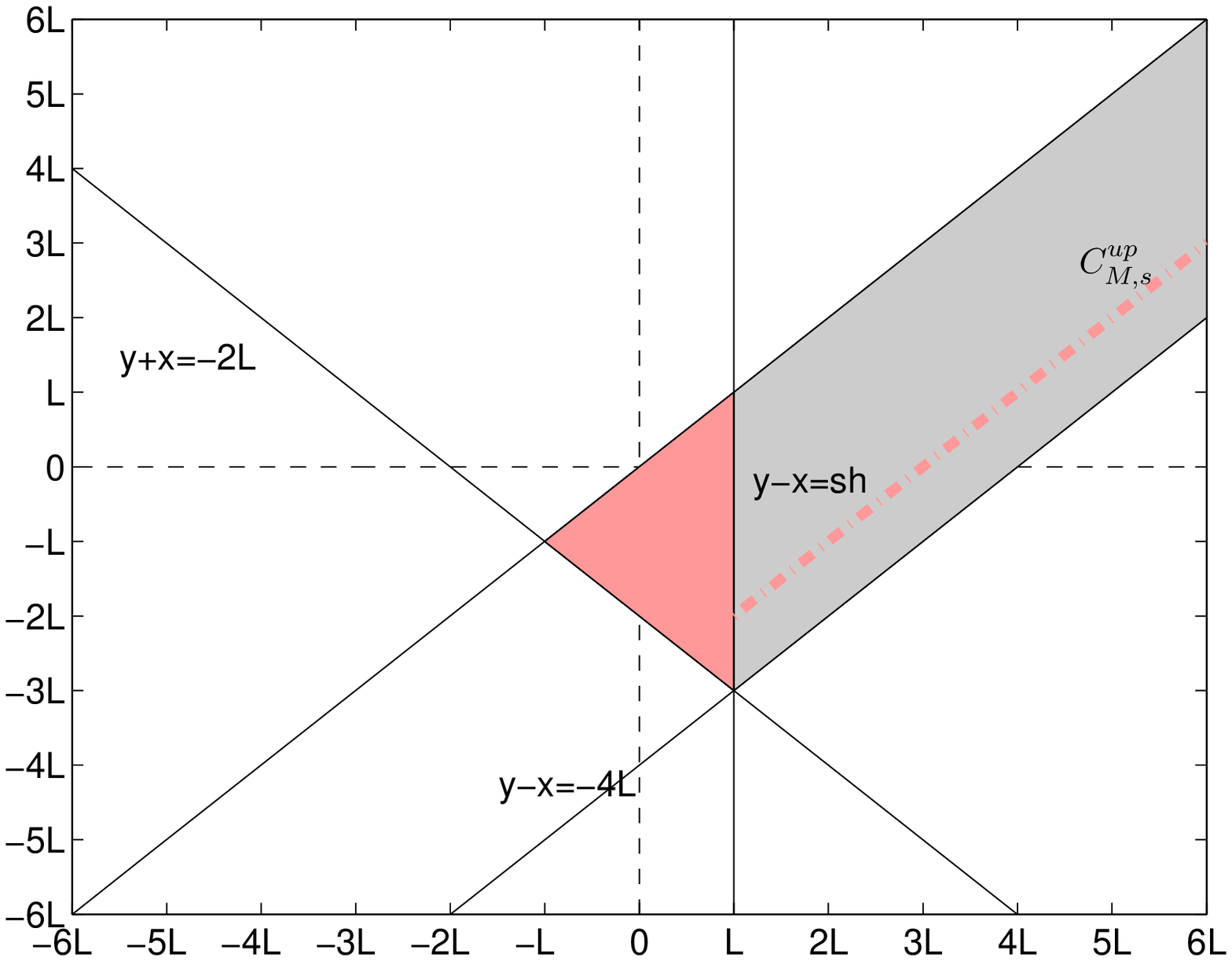}
\includegraphics[scale=0.36]{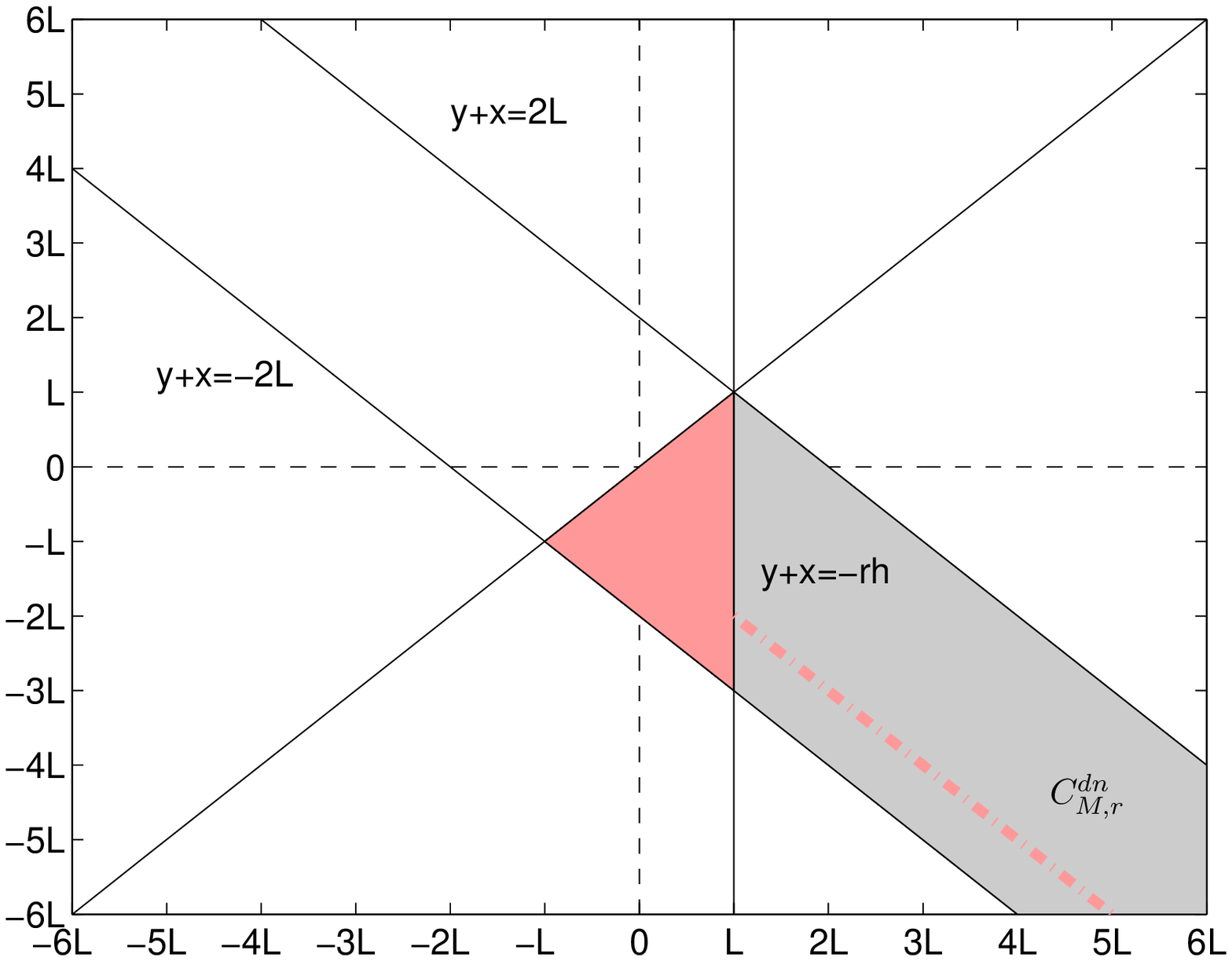}
\caption{Additional properties of ${M}^{up}$ (to the left) and ${M}^{dn}$ (to the right)   \label{property2}}
\end{figure}

As a result, as for system \eqref{nucleiKbarrati}, we need only to compute the unknowns in the orange triangle depicted in Figure \ref{square2}, since in the remainder of the support we can apply the properties discussed above.
\begin{figure}[t!]
\includegraphics[scale=0.5]{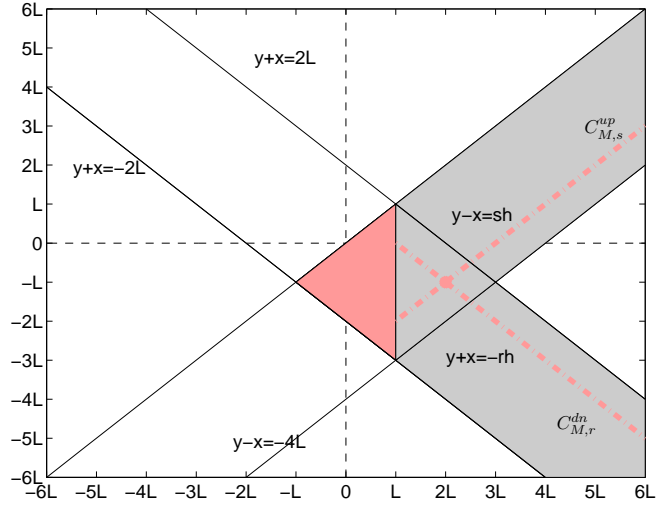}
\caption{Geometrical visualization of the computational area \label{square2}}
\end{figure}

The computational strategy developed for the numerical solution of this system is essentially the same adopted for system \eqref{nucleiKbarrati}. Hence, at first by using \eqref{propr3} we compute  
\begin{align*}
M^\up_{n-i+1,n-i+1}& = -\frac 12 \int_{-\infty}^{x_{n-i+1}} |u_0(z)|^2 dz \\
M^\dn_{n-i+1,n-i+1} &=- \frac 12 u_{0,n-i+1}. 
\end{align*}
 
After that, fixed $j=1,2,\dots,2L$, system \eqref{nucleiM} is collocated in the nodal points 
$$D_j=\{ (x_{-n+j+i-1}, x_{-n-j+i-1}) \in \RR^2 \  \quad \forall i=1,\ ...\ , \ 2n-j+1,  \forall j=1,2,3,... \},$$
and the integrals are approximated by using the composite trapezoidal rule. Operating in this way, we obtain the sequence of $2 \times 2$ systems
\begin{equation*}
\begin{cases}
{M}^{up}_{i+j-n-1,i-j-n-1} -\frac{h}{2} u_{0,i+j-n-1} {M}^{dn}_{i+j-n-1,i-j-n-1}  \\ \hspace{3cm}   =  h (1-\delta_{i-1,0}) \displaystyle \sum_{\ell=1}^{i-1} u_{0,i+j-n-1-\ell} {M}^{dn}_{i+j-n-\ell-1,i-j-n-\ell-1} \\ \\
\frac{h}{2} u_{0,i+j-n-1}  {M}^{up}_{i+j-n-1,i-j-n-1} + {M}^{dn}_{i+j-n-1,i-j-n-1} \\ \hspace{3cm} = -\frac{u_{0,i-n-1}}{2}-\frac{h}{2} \displaystyle \sum_{\ell=1}^{j} (2-\delta_{\ell j}) u_{0,i+j-n-\ell-1} {M}^{up}_{i+j-n-\ell-1,i+\ell-j-n-1}
\end{cases}
\end{equation*}
that we solve recursively, by following the ordering depicted  in Figure \eqref{collocazioneM}.

\begin{figure}[t!]
\includegraphics[scale=0.33]{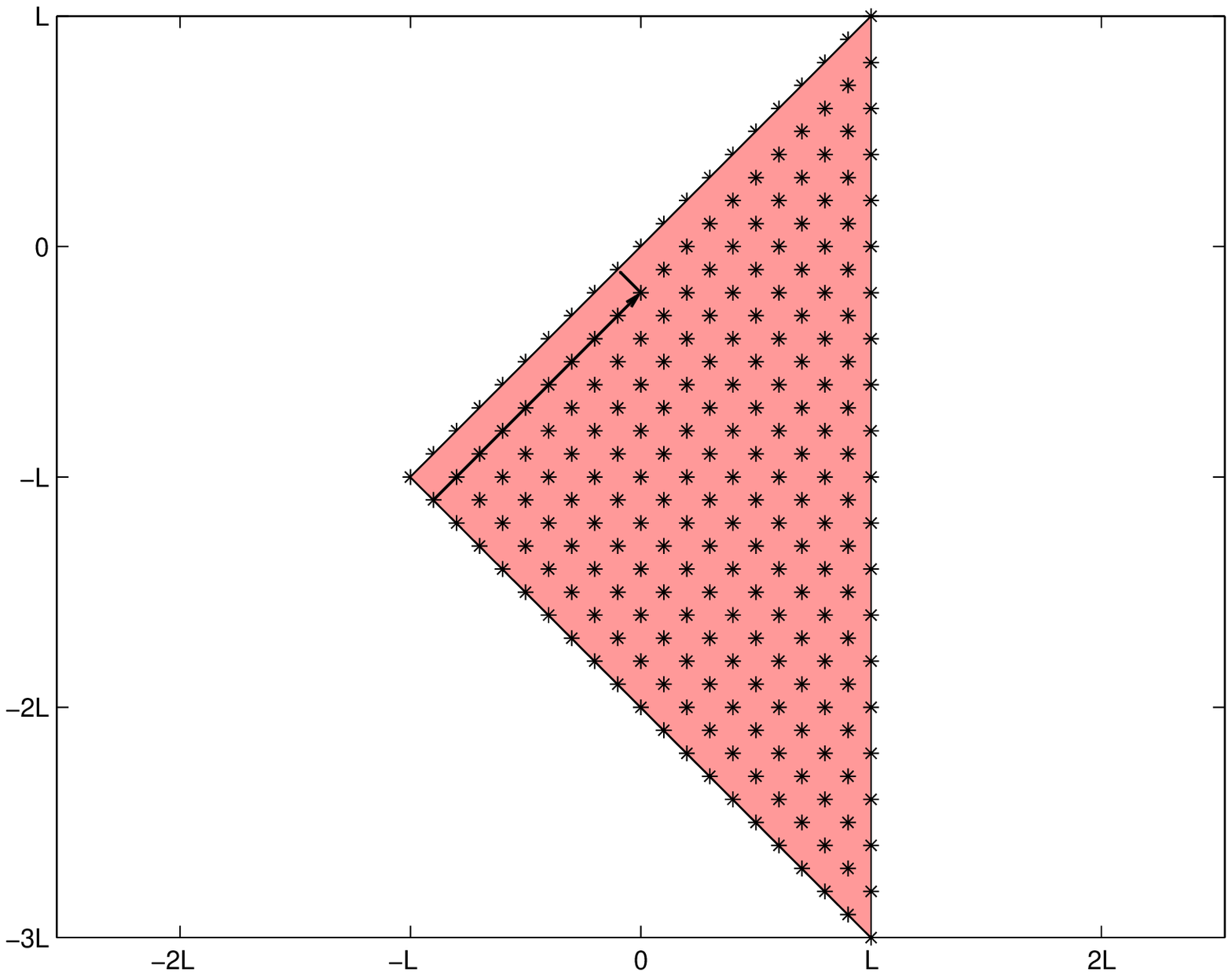}
\includegraphics[scale=0.33]{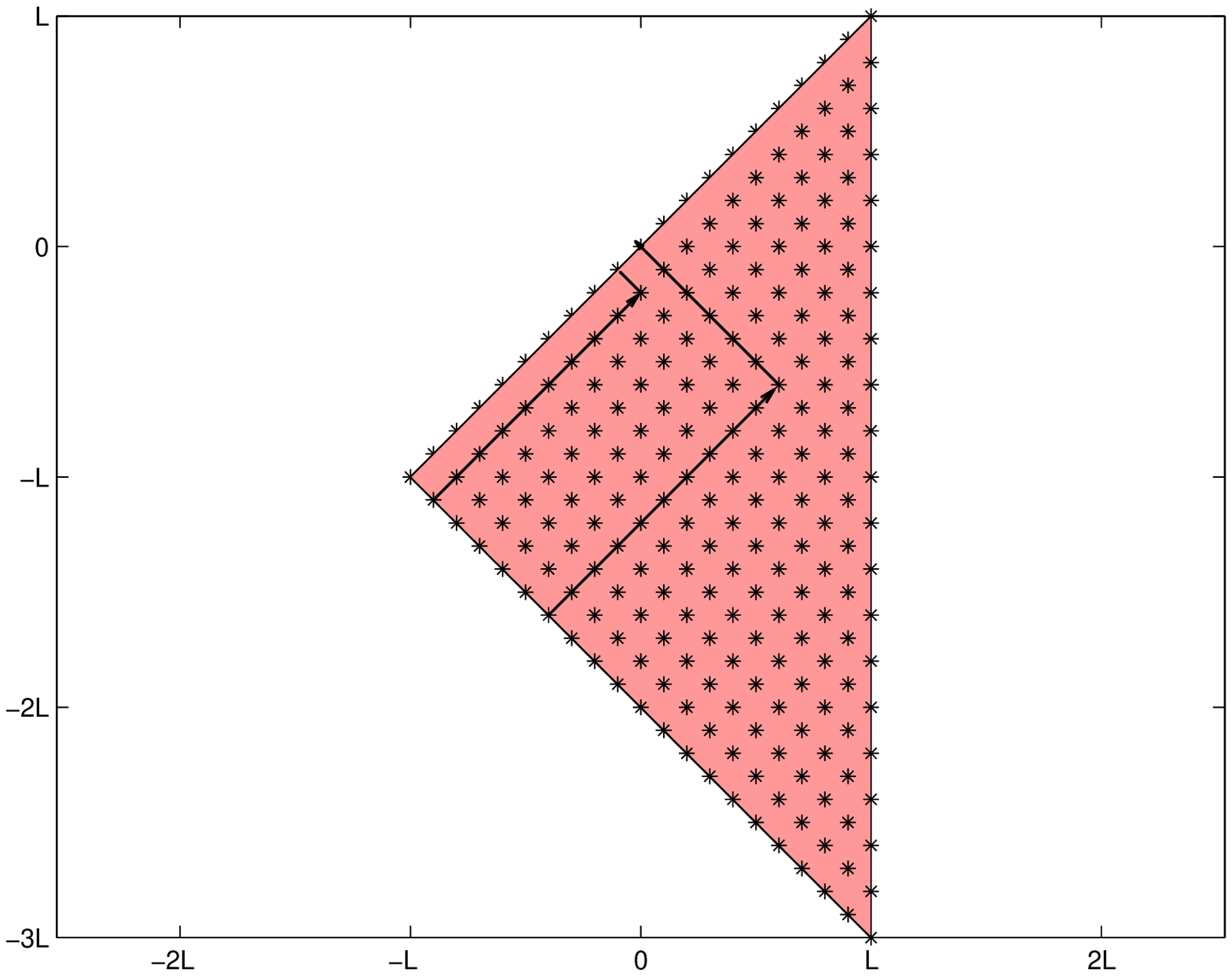} 
\caption{Sorting visualization of collocation points in the orange trangle \label{collocazioneM}}
\end{figure}

\subsection{Numerical solution of Marchenko equations}
Once $\bar{K}^{up}$ and $\bar{K}^{dn}$ has been computed, we have to solve the integral equations 
\eqref{Mar_K} and \eqref{Mar_M} that, by Remark \ref{remark1},  can be written as follows:

\begin{equation}\label{Mar_K1}
\bar{K}^{dn}(x,y)+\zO_\ell(x+y)
+\displaystyle \int_x^\infty \bar{K}^{up}(x,z)\zO_\ell(z+y) \, dz=0
\end{equation}
and
\begin{equation}\label{Mar_M1}
M^{dn}(x,y)-\zO_r(x+y)
+\displaystyle\int_{-\infty}^x M^{up}(x,z)\zO_r(z+y) \, dz =0,
\end{equation}
for $y \geq x \geq 0$ and $y \leq x \leq 0$, respectively.
Let us first consider \eqref{Mar_K1}. Fixed a steplenght $h$ and taken a set of nodal points $(x_i,x_j)$ with $x_i=ih$, $i=0,1,...$ and $j=i,i+1,...$ we collocate \eqref{Mar_K1} in $(x_i,x_j)$, taking a great advantage by the support of $\bar{K}^{up}$ and  $\bar{K}^{dn}$ as explained in the following

\begin{lemma}
If $u_0(x)=0$ for $|x|>L$, $\Omega_\ell(z)=0$ for $z>2L$.
\end{lemma}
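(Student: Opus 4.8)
The plan is to read the Marchenko equation \eqref{Mar_K1} as an identity determining the unknown $\Omega_\ell$, and to exploit the compact support of the auxiliary functions already established in Lemma \ref{lemmaK}. The guiding idea is that, although \eqref{Mar_K1} holds for all $y\ge x\ge 0$, I am free to evaluate it at the single value $x=L$, i.e.\ at the right edge of the support of $u_0$. At that edge both the forcing term and the integral operator degenerate simultaneously, and the equation collapses to the pointwise statement I want.

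Concretely, I would first fix $x=L$ and take any $y>L$, so that \eqref{Mar_K1} reads $\bar{K}^{dn}(L,y)+\Omega_\ell(L+y)+\int_L^\infty \bar{K}^{up}(L,z)\,\Omega_\ell(z+y)\,dz=0$. Next I would dispose of the inhomogeneous term: since $L\le L$ and $L+y>2L$, part (2) of Lemma \ref{lemmaK} gives $\bar{K}^{dn}(L,y)=0$. Then I would treat the integral: on the entire range of integration one has $z\ge L$, hence $L+z\ge 2L$ with $L\le L$, so part (2) of Lemma \ref{lemmaK} forces $\bar{K}^{up}(L,z)=0$ identically and the integral vanishes. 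What survives is $\Omega_\ell(L+y)=0$ for every $y>L$; writing $w=L+y$, this is exactly $\Omega_\ell(w)=0$ for $w>2L$, as claimed.

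The step I expect to require genuine care is the integral term. Viewed naively, \eqref{Mar_K1} couples $\Omega_\ell(x+y)$ to the values $\Omega_\ell(z+y)$ with $z\ge x$, that is, to $\Omega_\ell$ evaluated at \emph{larger} arguments, so a direct induction propagating vanishing from small to large $w$ is not available; this anti-Volterra coupling is precisely why one cannot simply quote the previous lemmas. The point of evaluating at the edge $x=L$ is that the relevant slice $\bar{K}^{up}(L,\cdot)$ is supported only where the integrand is already forced to be zero, so the troublesome coupling disappears altogether rather than having to be controlled. Two routine checks then remain: that \eqref{Mar_K1} is legitimately applicable at $x=L$ (it is, since the admissibility condition is $y\ge x\ge 0$, satisfied by $x=L$ and $y>L$), and that in the real reduction of Remark \ref{remark1} the kernel appearing under the integral is indeed $\bar{K}^{up}$, so that Lemma \ref{lemmaK} applies verbatim; in the general complex setting \eqref{Mar_K} one would instead invoke the analogous support property of $K^{dn}$.
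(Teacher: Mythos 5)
Your proof is correct and takes essentially the same route as the paper: the paper collocates \eqref{Mar_K1} at the nodal points $(x_n,x_{n+m})$ with $x_n=L$, $m\ge 1$, and invokes Lemma \ref{lemmaK}(2) to annihilate both the forcing term $\bar{K}^{dn}_{n,n+m}$ and the kernel values $\bar{K}^{up}_{n,n+m}$, which is exactly your evaluation of the equation at the edge $x=L$, $y>L$. The only cosmetic difference is that the paper phrases the argument in discretized collocation form, while you work directly with the continuous equation.
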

\begin{proof}
Given the  steplenght $h$ and assuming $x_n=nh=L$, we set  $\bar{K}^{dn}(x_i,x_j)=\bar{K}^{dn}_{ij}$ and $\bar{K}^{up}(x_i,x_j)=\bar{K}^{up}_{ij}$ as well as $\Omega_\ell(x_i+y_j)=(\Omega_\ell)_{i+j}$. Recalling that, as proved in Lemma \ref{lemmaK},
$$\bar{K}^{dn}_{n,n+m}=\bar{K}^{up}_{n,n+m}=0, \quad m=1,2,...$$
and using \eqref{Mar_K1} it is immediate to state that
$$(\Omega_\ell)_{2n+m}=0, \quad m=1,2,...$$
Moreover,  considering that $\bar{K}^{up}(x,z)=0$ for $x \geq x_n$  and recalling \eqref{propr1}, equation \eqref{Mar_K1} implies that 
\begin{equation}\label{Omega_2n}
(\Omega_\ell)_{2n}=(\Omega_\ell)(x_n,x_n)=-\bar{K}^{dn}_{n,n}=\frac{1}{2} u_0(x_n).
\end{equation}
As a result, if the support of $u_0$ is $[-L,L]$ then the support of $\Omega_\ell(\alpha)$ is $[0,2L]$.

Analogous considerations allow to claim that
$$ supp(\Omega_r(z)) \subset [-2L,0]$$
if $supp(u_0) \subset [-L,L]$.
\end{proof}

To compute $\Omega_\ell$ in $[0,2L]$, first we collocate \eqref{Mar_K1} in $(x_{n-1},x_n)$ and approximate the integral by the trapezoidal rule. Proceeding in this way we obtain that $(\Omega_\ell)_{2n-1}$ is the solution of the equation
$$\left(1-\frac{h}{2} \bar{K}^{up}_{n-1,n-1}\right) (\Omega_\ell)_{2n-1}=-\bar{K}^{dn}_{n-1,n}+\frac{h}{4} \bar{K}^{up}_{n-1,n} u_0(x_n).$$
Collocating now in $(x_{n-1}, x_{n-1})$ and adopting the same procedure we obtain that for $h$ small enough $(\Omega_\ell)_{2n-2}$, is the solution of the equation
$$\left(1-\frac{h}{2} \bar{K}^{up}_{n-1,n-1}\right) (\Omega_\ell)_{2n-2}=-\bar{K}^{dn}_{n-1,n-1}+\frac{h}{2} \bar{K}^{up}_{n-1,n} (\Omega_\ell)_{2n-1}.$$
Iterating the procedure and collocating \eqref{Mar_K1} in $(x_{n-j},x_{n-j+k})$ with $j=1,2,...,n$ and  $k=0,1$ (if $j=n$ then $k=0$) we obtain that $(\Omega_\ell)_{2(n-j)+k}$
is the solution of the following  equation
$$\left(1-\frac{h}{2} \bar{K}^{up}_{n-j,n-j}\right) (\Omega_\ell)_{2(n-j)+k}=-\bar{K}^{dn}_{n-j,n-j+k}+\frac{h}{2} \sum_{\ell=1}^{j-1} \bar{K}^{up}_{n-j,n-j+\ell} (\Omega_\ell)_{2(n-j)+k+\ell}.$$
As $|\bar{K}^{dn}(x,x)|$ is decreasing, $h$ can be considered sufficiently small if $ 1-\frac{h}{2} \bar{K}^{up}_{0,0}>0$.

The same approach allows us to compute $\Omega_r$, that is to solve \eqref{Mar_M1} in a sequence of discretization points of its support $x_i=ih$, $i=0,-1,...,-2n$, $x_{-n}=-L$. To this end, recalling that $M^{up}(x,z)=0$ if $x+z<-2L$ and using \eqref{Mar_M1}, first we note that in the bisector $y=x$
$$(\Omega_r)_{-2n}=\Omega_r(x_{-n},x_{-n})=M^{dn}_{-n,-n}=-\frac{1}{2}u_0(x_{-n}). $$
Collocating \eqref{Mar_M1} in $(x_{-n+1},x_{-n})$ and approximating the integral value by the trapezoidal rule we obtain the equation
$$\left(1+\frac{h}{2} {M}^{up}_{-n+1,-n+1}\right) (\Omega_r)_{-2n+1}={M}^{dn}_{-n+1,-n}+\frac{h}{4} {M}^{up}_{-n+1,-n} u_0(x_{-n}).$$
Iterating the procedure we collocate \eqref{Mar_M1} in $(x_{-n+j},x_{-n+j-k})$ with $j=1,2,...,n$ and  $k=0,1$ (if $j=n$ then $k=0$). Hence we obtain that $(\Omega_r)_{2(j-n)-k}$ is solution of the following equation
\begin{align*}
\left(1+\frac{h}{2} {M}^{up}_{-n+j,-n+j}\right) (\Omega_r)_{2(j-n)-k}& \\ &\hspace{-2cm} =  M^{dn}_{-n+j,-n+j-k}+\frac{h}{2} \sum_{\ell=1}^{j-1} M^{up}_{j-n,j-n-\ell} (\Omega_r)_{2(j-n)-k-\ell},
\end{align*}
for any $h$ small enough, that is for $1+\frac{h}{2} M^{up}_{0,0}>0$.

\subsection{Bound states and norming constants}
Let us give a brief description of the matrix-pencil method that we have recently developed for the identification of the bound states and the norming constants \cite{FeVanSe2013}.
Setting $z_j=e^{f_j}$, the spectral function sum $S_\ell(\alpha)$ introduced in \eqref{S_ell}, can be represented as the monomial-power sum
\begin{equation*}
S_\ell(\alpha)=\sum_{j=1}^{n} \sum_{s=0}^{m_j-1} c_{js} \alpha^s z_j^\alpha, \quad 0^0 \equiv 1.
\end{equation*}

Setting $M=m_1+...+m_n$, the method allows to compute the parameters $\{n,m_j,z_j\}$ and the coefficients $\{c_{js}\}$  given $S_\ell(\alpha)$ in $2N$ integer values ($N>M$) 
$$\alpha=\alpha_0,\alpha_0+1,\dots, \alpha_0+2N-1, \quad \text{with} \quad \alpha_0 \in \mathbb{N}^+=\{0,1,2,... \}.$$ 

The basic idea of the method is the interpretation of $S_\ell(\alpha)$  as the general solution of a homogeneous linear difference equation of order $M$
$$ \sum_{k=0}^M p_k S_{k+\alpha_0}=0$$
whose characteristic polynomial (Prony polynomial)
$$P(z)=\prod_{j=1}^{n} (z-z_j)^{m_j} =\sum_{k=0}^M p_k z^k, \quad p_M \equiv 1$$
is uniquely characterized by the $z_j$ values we are looking for. The identification of the zeros $\{ z_j\}$ allows to compute the  coefficients $c_{js}$ by solving a linear system.

For the computation of $\{z_j\}$ and then of the bound states $\lambda_j$, the given data are arranged in the two Hankel matrices of order $N$
$$({\bf{S}}_{\ell}^{0})_{ij}= S_\ell(i+j-2), \quad ({\bf{S}}_{\ell}^{1})_{ij}=S_\ell(i+j-1), \quad \quad i,j=1,2,\dots,N. $$

To these matrices we associate the $M \times M$ matrix-pencil
$$ {\bf{S}}_{MM}(z)=({{\bf{S}}_{NM}^{0}})^* ({\bf{S}}_{NM}^{1} - z {\bf{S}}_{NM}^{0})$$
where the asterisk denotes the conjugate transpose.
As proved in \cite{FeVanSe2013}, the zeros $z_j$ of the Prony polynomial, with their multiplicities, are exactly the generalized eigenvalues of the matrix-pencil ${\bf{S}}_{MM}(z)$. Then, by applying the Generalized Singular Value Decomposition to the matrices  ${\bf{S}}_{NM}^{0}$ and ${\bf{S}}_{NM}^{1}$, the algorithm developed allows to compute the zeros $z_j$ and then the bound states $\lambda_j$, as $\lambda_j=-\log{z_j}.$

The vector of coefficients  $${\bf{c}}=[c_{1\,0},...,c_{1\,n_1-1},...,c_{L\,0},...,c_{L\,n_{M}-1}]^T$$ is then  computed by solving (in the least square sense) the overdetermined linear system 
\begin{equation*}
\mathbf{S}_{NM}^{0} \mathbf{c}=\mathbf{S}_\ell^{0}
\end{equation*}
where ${\bf{S}_\ell}^{0}=[S_\ell(0),\, S_\ell(1),\, \dots,\, S_\ell(N-1)]^T$
and  $\mathbf{K}_{NM}^{0}$ is the Casorati matrix associated to the monomial powers $\{k^s z_j^k\}$ for $k=1,...,N-1$.

If $m_j \equiv 1$, the Casorati matrix ${\bf K}^{0}_{Nn}$ reduces to the Vandermonde matrix $(V)_{ij}=z_j^{i+1}$ of order $N \times n$ associated to the  zeros $ z_1,\, \dots,\, z_n$.
The solution of the Casorati system allows us to immediately compute the norming constants as $(\Gamma_\ell)_{js}=s! c_{js}.$

The coefficients $\{ (\Gamma_r)_{js} \}$ are then obtained by solving, in the least square sense, a linear system whose vector of known data is given by $\Omega_r(\alpha)$ evaluated in a sufficiently large set of points $N>M$.

\section{Numerical results} \label{tests}
In order to access the effectiveness of our method in the approximation of $\Omega_\ell$, we adopted the following error estimate
$$E_{r,n}= \frac{\|\tilde{\Omega}_{\ell,n}-\Omega_\ell\|_\infty}{\|\Omega_\ell\|_\infty}$$
where $\tilde{\Omega}_{\ell,n}(y)$ is the computed Marchenko kernel in $n$ equispaced point of $[0,\,2L]$.
Let us now show our results in two cases in which $\Omega_\ell$ is analitically known.

{\emph{Test 1.}}
Let us consider as initial potential the soliton given by
\begin{equation}
u_0(x)= \frac{-2ce^{-2ax}}{1+\frac{|c|^2}{4p^2}e^{-4px}}, \quad x \in \RR,
\end{equation}
where $c$, $a$ and $p$ are real parameters \cite{ZS}.
In this case the Marchenko kernel on the left is
$$\Omega_\ell(x)=e^{-ax}.$$
Considering that $u_0(x) \leq 10^{-13}$ if $|x|>15$, in our computation we assumed $L=15
$. Hence taken $n$ and a steplenght $h$ such that $nh=2L$, we computed $\Omega_\ell(\alpha)$ in the $n$ points $\alpha_i=ih$, $i=0,1,...,n$ and we reported in Table \ref{table} the relative error $E_{r,n}$ for different values of $n$.

\begin{table}
\begin{minipage}{3 cm}
\centering
\begin{tabular}{|l|l|}
\hline
$n$ & $E_{r,n}$ \\
\hline
300 & 1.03e-03 \\
600 & 2.62e-04 \\
900 & 1.17e-04 \\
1200 & 6.60e-05 \\ 
\hline
\end{tabular}  
\end{minipage} \hspace{2cm} 
\begin{minipage}{3 cm}
\centering
\begin{tabular}{|l|l|}
\hline
$n$ & $E_{r,n}$ \\
\hline
300 & 8.42e-03 \\
600 & 2.08e-03 \\
900 & 9.27e-04\\
1200 & 5.21e-04\\ 
\hline
\end{tabular}  
\end{minipage}
\caption{Relative errors $E_{r,n}$ for $\Omega_\ell$ in the soliton case (left) and in the multisoliton case (right) \label{table}}
\end{table}

{\emph{Test 2.}}
Let us now take as  initial potential  a multisoliton represented by four solitons which interact each other nonlinearly \cite{VanDerMee2013}, namely
\begin{equation}
u_0(x)=
\begin{cases}
-2 {\bf{b}}^* [ e^{2x{{\bf{A}}^*}}+ {\bf Q} e^{-2x{\bf{A}}} {\bf N} ]^{-1} {\bf{c}}^*  & x \geq 0 \\
-2 {\bf{c}} [ e^{-2x{{\bf{A}}}}+ {\bf N} e^{2x{\bf{A}}^*} {\bf Q} ]^{-1} {\bf{b}}  & x < 0 
\end{cases}
\end{equation}
where $\bf{b}$ and $\bf{c}$ are column and row vectors, respectively, ${\bf A}$ is a matrix with eigenvalues $\alpha_j$ having positive real parts and ${\bf N}$ and ${\bf Q}$ are two matrices obtained by solving the respective Lyapunov equations:
$${\bf Q}{\bf A}+{\bf A}^* {\bf Q}={\bf c}^* {\bf c}, \quad {\bf A}{\bf N}+{\bf N}{\bf A}^*=\bf{b} \bf{b}^*.$$
In this case, it is possible to prove \cite{VanDerMee2013} that the initial Marchenko kernel $\Omega_\ell$ is given by
$$\Omega_\ell(\alpha)= {\bf{c}} e^{-\alpha{\bf{A}}} {\bf{b}},$$
the reflection coefficients $R(\lambda)=L(\lambda)=0$, the bound state terms are $\lambda_j=i a_j$ and the norming constants $\Gamma_{\ell,j}=b_j c_j$.

As in the previous example we assumed $L=15$ as $u_0(x) \leq 10^{-13}$ for $|x| > 15$. Moreover, for simplicity we considered 
$$ {\bf A}= diag([1, 2, 3, 4]), \quad {\bf b}=[1, \, 2, \, -2, \, -1]^T, \quad {\bf c}=[2, \, 1, \, 1, \, 2],$$
which implies that
$$\Omega_\ell(\alpha)=\sum_{j=1}^4 b_j c_j e^{-\alpha a_j }.$$
The results reported in Table \ref{table} show that, as expected, the relative error $E_{r,n}$ decreases with respect to $n$.

\section{Conclusions and perspectives}\label{conclusion}
The effectiveness of the numerical solution of the direct scattering problem in the NLS, as probably in various other NPDEs of integrable type, basically depends on the effectiveness of the numerical solution of Volterra's systems of integral equations with structured kernels on unbounded domains and then on the identification of parameters in monomial-exponential sums.
Our experiments show that in the reflectionless case our matrix-pencil method for the identification of spectral parameters is fully reliable whenever the relative error coming from the solution of systems of Volterra is small enough.

A challenging mathematical problem is to develop effective algorithms to approximate quite well the reflection coefficients, that is to compute ratios \eqref{L}-\eqref{R} or to generate alternative formula for their evaluation. This challenging task is devoted to another paper, as well the generation of a new family of numerical methods for computing more efficiently the auxiliary functions and the Marchenko kernels.   
\bibliographystyle{amsplain}
\bibliography{biblio}

\end{document}


@author:
@affiliation:
@title:
@language: English
@pages:
@classification1:
@classification2:
@keywords:
@abstract:
@filename:
@EOI

